\documentclass[11pt,reqno]{amsart} 
\setlength{\hoffset}{-.5in}
\setlength{\voffset}{.5in}
\usepackage{amssymb,latexsym}
\usepackage{graphicx}
\usepackage{fancyhdr}

\textwidth=6.175in
\textheight=8.5in

\def\cal{\mathcal}

\theoremstyle{plain}
\numberwithin{equation}{section}
\newtheorem{thm}{Theorem}[section]
\newtheorem{theorem}[thm]{Theorem}

\newtheorem{lemma}[thm]{Lemma}
\newtheorem{definition}[thm]{Definition}
\newtheorem{proposition}[thm]{Proposition}
\newtheorem{remark}[thm]{Remark}
\newtheorem{corollary}[thm]{Corollary}

\begin{document}

\overfullrule5pt
\def\sfrac#1#2{#1/#2}

\title[A sufficient condition]{A sufficient condition for cubic $3$-connected plane bipartite graphs to be hamiltonian}

\author{Jan~Florek}
\address{Faculty of Pure and Applied Mathematics,
 Wroclaw University of Science and Technology,
 Wybrze\.{z}e Wyspia\'nskiego 27,
50--370 Wroc{\l}aw, Poland}
\email{jan.florek@pwr.edu.pl}
\date{}

\begin{abstract}
Barnette's conjecture asserts that every cubic $3$-connected plane bipartite graph is hamiltonian. Although, in general, the problem is still open, some partial results are known. In particular, let us call a face of a plane graph big (small) if it has at least six edges (it has four edges, respectively). Goodey proved for a $3$-connected bipartite cubic plane graph $P$, that if all big faces in $P$ have exactly six edges, then $P$ is hamiltonian. In this paper we prove that the same is true under the condition that no face in $P$ has more than four big neighbours. We also prove, that if each vertex in $P$ is incident both with a small and a big face, then~$P$ has at least $2^{k}$ different Hamilton cycles, where $k = \left\lceil\frac{|B|-2}{4\Delta(B) - 7}\right\rceil$, $|B|$ is the number of big faces in $P$ and $\Delta(B)$ is the maximum size of faces in $P$.
\end{abstract}

\keywords{Barnette's conjecture, Eulerian plane triangulations, three colourable plane graphs, vertex partition into two induced forests}

\maketitle

\section{Introduction}
In this paper we consider only finite and simple graphs. We use Diestel \cite{flobar2} as a reference for undefined terms. In particular, if $G$ is a plane graph, then $ V(G)$ is the set of vertices and $\Delta (G)$ (or $\delta(G)$) is the maximum (minimum, respectively) degree of $G$. For $U \subseteq V(G)$, the neighbours in $V(G) \setminus  U$ of vertices in $U$ are called \textit{neighbours of} $U$.

Let $\cal P$ denote the family of all cubic $3$-connected plane bipartite graphs.  In 1969, Barnette  (see Tutte~\cite{flobar17}, Problem $5$) conjectured that every graph of ${\cal P}$ has a Hamilton cycle. The problem whether a cubic bipartite plane graph has a Hamilton cycle (without the assumption of $3$-connectivity) is NP-complete, as shown by Takanori, Takao and Nobuji \cite{flobar16}. Therefore, it seems wise to discuss only sufficient conditions for a graph of $\cal P$ to be hamiltonian. Goodey \cite{flobar7} proved that if a graph in ${\cal P}$ has only faces with $4$ or $6$ edges, then it is hamiltonian. Kelmans \cite{flobar12, flobar13}  proved that Barnette's conjecture holds if  and only if every cyclically-$4$-edge-connected graph of $\cal P$ has a Hamilton cycle (a graph is \textit{cyclically-k-edge connected} if  it has no edge cut of size $k-1$ such that both sides of the cut contain a cycle). Brinkmann, Goedgebeur and McKay \cite{flobar3} (see also Holton, Manvel and McKay \cite{flobar10}) used computer search to confirm Barnette's conjecture for graphs up to $90$ vertices. Florek \cite{flobar4} proved that if a graph of $\cal P$ possess a $2$-factor consisting only of facial $4$-cycles, then it has a Hamilton cycle.

Stein  \cite{flobar15} expresses hamiltonicity in terms of the dual graph (see also S.L. Hakimi and E.F. Schmeichel \cite{flobar8}).  Let $H$ be a plane triangulation. Stein proved that the dual $H^{*}$ has a Hamilton cycle if and only if there exists a partitioning of the vertex set of $H$ into two subsets each of which induces a forest. Notice that $G^{*} \in \cal P$ if and only if $G$ is an Eulerian plane triangulation different from $3$-cycle. In view of the Stein theorem Barnette's  conjecture is equivalent to the problem whether for each Eulerian plane triangulation there exists a partitioning of $V(G)$ into two subsets each of which induces a forest.

 A \textit{$k$-colouring} of a graph $G$ is a mapping $c \colon V(G) \rightarrow S$, where $S$ is a set of $k$ colours. Alternatively, a $k$-colouring may be viewed as a partition $\{V_{i}: i \in S\}$ of $V(G)$, where $V_{i}$ denotes the (possibly empty) set of vertices assigned to colour $i$.  A colouring~$c$ is \textit{proper} if no two adjacent vertices are assigned the same colour. Then, each $V_i$ forms an independent set of vertices. A graph is \textit{$k$-colourable} if it has a proper $k$-colouring. Note that every Eulerian plane triangulation has a unique proper $3$-colouring (see Heawood \cite{flobar9}).
 
Let $G$ be an Eulerian plane triangulation different from $3$-cycle. A vertex of $V(G)$ is called \textit{big} (\textit{small}) if it is of degree at least $6$ (of degree equals $4$, respectively). Let  $B$ denote the set of all big vertices in $G$ and suppose that $G[B]$ is a subgraph of $G$ induced by the set $B$. If $C$ is a cycle in $G$ whose vertices are all small in $G$, then there are tree possibilities: $G$ is the octahedron, or $C$ is of even order and $G$ is the join $C \vee E^2$ (where $E^{2}$ is an empty graph of order $2$), or $C$ is a  facial $3$-cycle such that neighbours of $V(C)$ induce a facial $3$-cycle in $G[B]$. If $G \neq  C^{2l} \vee E^2$, for $l \geqslant 2$, then each path $P$ in $G$ in which all inner vertices are small and ends are big is either an induced path in $G$ or $P + ac$ is an induced cycle in $G$, where $a$, $c$ are different adjacent ends of $P$. Notice that the set $V_{0}(P) $ of all neighbours of $V(P)$ consists of two big vertices in $G$ (say $b, d$) and $abcda$ is a facial $4$-cycle in $G[B]$. An easy verification shows that the following conditions are equivalent:
\begin{enumerate}
\item[($1$)] $G \neq  C^{2l} \vee E^2$, for $l \geqslant 2$,
\item[($2$)] $G$ has at least three big vertices,
\item[($3$)] every facial cycle in $G[B]$ is a $3$-cycle or a $4$-cycle,
\item[($4$)] $G[B]$ is $2$-connected.
\end{enumerate}
Let $\Theta$ be the family of all $2$-connected $3$-colourable plane graphs having only faces bounded by $3$-cycles or $4$-cycles. By the equivalence of conditions $(2)-(4)$, if $G$ is an Eulerian plane triangulation with  at least three big vertices, then $G[B] \in \Theta$. 

Let $J \in {\Theta}$ and suppose that  ${\cal J} =\{J_{1}, J_{2}, J_{3}\}$ is a partition of $V(J)$, where each $J_i$ is a (possibly empty) independent set in $J$.
\begin{definition}\label{definition 1.1} A partition  $\cal K$ of $V(J)$ into two subsets is called \textit{compatible} with~$\cal J$ if every facial $4$-cycle in $J$  has two non-consecutive vertices which belong either to the same part of $\cal K$ and ${\cal J}$ or to distinct parts of $\cal K$ and ${\cal J}$. 
\end{definition}
The following lemma asserts that using the partition $\cal J$ we can obtain many partitions of $V(J)$ into two subsets which are compatible with $\cal J$. 
We prove Lemma \ref{lemma1.1} in Chapter $3$. 
\begin{lemma} \label{lemma1.1}  If  $I \subseteq J_{1} \cup J_{2}$ is an independent set in $J$, then the partition  $\{(J_{1} \cup J_{2})\setminus  I, J_{3}\cup I\}$ of $V(J)$ is compatible with $\cal J$. 
\end{lemma}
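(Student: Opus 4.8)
The plan is to verify Definition~\ref{definition 1.1} directly for each facial $4$-cycle of $J$, exploiting the rigidity of proper $3$-colourings of a $4$-cycle. Write $\mathcal{K} = \{K_1, K_2\}$ with $K_1 = (J_1 \cup J_2) \setminus I$ and $K_2 = J_3 \cup I$, and regard $\mathcal{J}$ as the proper $3$-colouring $c \colon V(J) \to \{1,2,3\}$ with $c^{-1}(i) = J_i$. The first step is the elementary observation that no proper $3$-colouring of a $4$-cycle can make both diagonals bichromatic: if $v_1 v_2 v_3 v_4$ were coloured with $c(v_1) \neq c(v_3)$ and $c(v_2) \neq c(v_4)$, then together with the four edge-constraints all four colours would be pairwise distinct, which is impossible with only three colours. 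Hence every facial $4$-cycle has a $\mathcal{J}$-monochromatic diagonal, and the whole proof reduces to choosing, for each such cycle, a diagonal witnessing compatibility.

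Next I would record how membership in $\mathcal{K}$ is determined by $c$ and $I$: a vertex lies in $K_2$ exactly when it is coloured $3$ or lies in $I$, and in $K_1$ otherwise. With this I split into cases according to the colour of a monochromatic diagonal $\{u,w\}$. If some diagonal is monochromatic of colour $3$, both its endpoints lie in $J_3 \subseteq K_2$, so they share a part of $\mathcal{K}$ and of $\mathcal{J}$, and compatibility holds at once. Otherwise fix a monochromatic diagonal of colour $j \in \{1,2\}$; if its two endpoints lie together in $I$ or together outside $I$, they again share a $\mathcal{K}$-part and a $\mathcal{J}$-part, and we are done.

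The main obstacle, and the only genuinely delicate case, is when this colour-$j$ diagonal $\{u,w\}$ has exactly one endpoint, say $u$, in $I$: then $u \in K_2$ while $w \in K_1$, so the diagonal sits in distinct $\mathcal{K}$-parts but the same $\mathcal{J}$-part, witnessing nothing. Here I would pass to the other diagonal $\{x,y\}$ and use the independence of $I$ decisively: both $x$ and $y$ are cycle-neighbours of $u$, so $u \in I$ forces $x,y \notin I$. Since $x$ and $y$ are both adjacent to the colour-$j$ vertices $u$ and $w$, neither is coloured $j$, and since we are in the branch where no diagonal is monochromatic of colour $3$, the pair $\{x,y\}$ is not doubly coloured $3$. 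A short check of the remaining colour patterns then finishes the case: if $c(x)=c(y)$, necessarily the colour in $\{1,2\}\setminus\{j\}$, both lie in $K_1$, giving a same-part witness; and if $c(x) \neq c(y)$, then one is coloured $3$ and lands in $K_2$ while the other is coloured in $\{1,2\}$ and lies outside $I$, hence lands in $K_1$, giving a distinct-part witness. Thus in every case some diagonal witnesses compatibility, which is exactly the claim. I expect the bookkeeping of colour patterns to be routine; the one idea that carries the proof is that independence of $I$ protects the second diagonal precisely when the first one fails.
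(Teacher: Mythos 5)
Your proposal is correct and follows essentially the same route as the paper: both arguments start from the observation that a proper $3$-colouring of a $4$-cycle forces a monochromatic diagonal, dispose immediately of the case where that diagonal has colour $3$ or lies entirely inside or outside $I$, and in the remaining case use the independence of $I$ to show that the opposite diagonal avoids $I$ and hence witnesses compatibility. The colour bookkeeping in your final paragraph matches the paper's case $(i_2)$ exactly.
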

Let $G$ be an Eulerian plane triangulation with at least three big vertices  and suppose that $\{V_{1}, V_{2}, V_{3}\}$ is the unique partition of $V(G)$ into three independent sets in $G$. Then ${\cal B} = \{ B \cap V_{1},  B \cap V_{2},  B \cap V_{3}\}$ is a partition of $B$ into three independent sets in $G[B]$. Notice that if $\{X, Y\}$ is a partition of $B$ such that $B \cap V_{1} \subseteq X$ and  $B \cap V_{2} \subseteq Y$ then, by Lemma~\ref{lemma1.1}, it is  compatible with $\cal B$.  Florek \cite{flobar5} (Proposition $2$) proved that if  both $X$ and $Y$ induce  a forest in $G[B]$ then $G^{*}$ is hamiltonian.  
The following theorem is a generalization of this result. It expresses hamiltonicity of  $G^{*}$  in terms of the graph $G[B]$. We prove Theorem \ref{theorem1.1} in Chapter $2$.
\begin{theorem}\label{theorem1.1} 
Let $G$ be an Eulerian plane triangulation with at least three big vertices.   The following conditions are equivalent:
\begin{enumerate}
\item[($a$)] $G^{*}$ is hamiltonian,
\item[($b$)] there exists a partition $\cal L$ of $B$ into two parts which is compatible with ${\cal B}$ and each part of $\cal L$ induces a forest in $G[B]$. 
\end{enumerate}
\end{theorem}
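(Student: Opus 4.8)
The plan is to recast condition $(a)$ through Stein's theorem: since $G$ is an Eulerian plane triangulation, $G^{*}$ is hamiltonian if and only if $V(G)$ admits a partition $\{F_{1},F_{2}\}$ in which both parts induce forests. The whole proof then becomes a comparison between two-forest partitions of $V(G)$ and two-forest partitions of the much smaller graph $G[B]$, with the small vertices handled through the local structure already recorded above. Inside each facial $4$-cycle $abcda$ of $G[B]$ sits a path $P$ of small vertices joining the big ends $a,c$, flanked by the two big vertices $b,d=V_{0}(P)$, each small vertex of $P$ being adjacent to both $b$ and $d$ (a \emph{strip}); inside each facial $3$-cycle of $G[B]$ there is, by the cited structure, either no small vertex or a single facial $3$-cycle of small vertices. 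Reading off the unique $3$-colouring along a strip shows that $b,d$ always receive the same colour, while $a,c$ receive the same colour exactly when $P$ has even length; equivalently, $b,d$ always agree in $\mathcal{B}$, and $a,c$ agree in $\mathcal{B}$ iff $P$ is even.

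For $(a)\Rightarrow(b)$ I would start from a two-forest partition $\mathcal{F}$ of $V(G)$ and restrict it to $B$. Each part then induces a forest in $G[B]$ automatically, so only compatibility with $\mathcal{B}$ must be checked, one facial $4$-cycle at a time. If $b$ and $d$ lie in the same part of $\mathcal{F}$, then, since they also agree in $\mathcal{B}$, the diagonal $\{b,d\}$ already witnesses compatibility. If $b$ and $d$ lie in different parts, I would use that $\mathcal{F}$ has no monochromatic cycle: two consecutive small vertices of $P$ in the colour of $b$ would close a monochromatic triangle with $b$, and likewise for $d$, so the colours along $a,P,c$ are forced to alternate. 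Hence $a,c$ agree in $\mathcal{F}$ precisely when $P$ is even, i.e.\ precisely when they agree in $\mathcal{B}$, and now $\{a,c\}$ witnesses compatibility. This direction is clean and requires no choices.

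For $(b)\Rightarrow(a)$ I would, conversely, extend a given compatible two-forest partition $\mathcal{L}=\{X,Y\}$ of $B$ to $V(G)$ by colouring the small vertices strip by strip and face by face. In a strip whose flanks $b,d$ lie in different parts, compatibility forces $a,c$ into the pattern admitting the alternating colouring of $P$; colouring $P$ alternately then makes every small vertex of the strip a leaf in its own colour class, so it can never lie on a monochromatic cycle. A facial $3$-cycle of $G[B]$ has a bichromatic boundary (a monochromatic boundary would already cycle in $G[B]$) and its big vertices are pairwise adjacent, so its inner small triangle, if present, introduces no virtual edge and can be two-coloured to avoid any internal monochromatic cycle. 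The delicate case is a strip whose flanks lie in the \emph{same} part, say both in $X$: colouring all of $P$ into $Y$ creates a $Y$-path from $a$ to $c$, whereas putting a single small vertex of $P$ into $X$ creates an $X$-path from $b$ to $d$, and in $G[B]$ neither $ac$ nor $bd$ is an edge, so each choice introduces one ``virtual'' diagonal into one of the two forests.

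The main obstacle is therefore to make these strip choices globally consistent, i.e.\ to guarantee that the virtual diagonals never close a monochromatic cycle spanning several faces. I would phrase the requirement as keeping $X$ together with its virtual $bd$-edges, and $Y$ together with its virtual $ac$-edges, both acyclic, and settle it by planarity: on the boundary disc of a strip the four vertices occur in the cyclic order $a,b,c,d$, so an $a$--$c$ path and a $b$--$d$ path running through the complement of the disc must cross, and in a plane graph a crossing forces a shared vertex, which is impossible here since that vertex would have to lie in both $X$ and $Y$. Consequently $a,c$ cannot already be joined in the $Y$-forest while $b,d$ are simultaneously joined in the $X$-forest, so for every same-flank strip at least one diagonal can be added safely; carrying this out, for instance by inserting the virtual diagonals one strip at a time and always choosing one that does not close a cycle, yields a genuine two-forest partition of $V(G)$, whence Stein's theorem returns a Hamilton cycle of $G^{*}$. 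The Jordan-curve crossing step is where the real work lies; the strip and $3$-face gadgets are then only bookkeeping.
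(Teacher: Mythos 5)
Your proposal is correct and follows essentially the same route as the paper: Stein's theorem to translate $(a)$ into a two-forest partition of $V(G)$, the forced alternation along a strip (via the monochromatic-triangle obstruction with $b$ and $d$) for $(a)\Rightarrow(b)$, and a strip-by-strip extension of the compatible partition of $B$ for $(b)\Rightarrow(a)$. Your Jordan-curve crossing argument for strips with same-coloured flanks is precisely the justification of the one step the paper leaves implicit -- the paper simply tests for an $X$-path from $b$ to $d$ and asserts that the resulting choice keeps both classes acyclic -- so on that point (and on the small-triangle gadget, which the paper assumes away) your write-up is, if anything, more explicit; the only nitpick is that $ac$ may in fact be an edge of $G[B]$, but this case only strengthens your crossing argument.
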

In Corollary  \ref{corollary2.1} of Chapter $2$ we give a simple application of Theorem \ref{theorem1.1}, which is a generalization of Florek\cite{flobar4} result. The following Proposition \ref{proposition1.1} is an extension of  Corollary  \ref{corollary2.1} (see also Florek \cite{flobar6}). The proof is contained in Chapter $2$.
\begin{proposition} \label{proposition1.1} Let $G$ be an Eulerian plane triangulation. If each facial $3$-cycle in $G$ has both a small and a big vertex, then $G^{*}$ has at least $2^{k}$ different Hamilton  cycles,  where $k =\left\lceil\frac{|B|-2}{4\Delta(G[B]) - 7}\right\rceil$.
\end{proposition}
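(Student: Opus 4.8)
The plan is to use the hypothesis to pin down the face structure of $G[B]$, to produce one compatible forest partition as a base point via Theorem~\ref{theorem1.1}, and then to count perturbations of it. First I would unpack the hypothesis: demanding that every facial $3$-cycle of $G$ carry both a small and a big vertex is the same as forbidding all-big and all-small facial $3$-cycles. Since a nontrivial conclusion needs $|B|\ge 3$ (for $|B|\le 2$ we have $k=0$ and $2^{0}=1$ is automatic), the octahedron and the graphs $C^{2l}\vee E^2$ are excluded, so by the classification of all-small cycles quoted in the introduction $G$ has \emph{no} all-small cycle at all. I would then argue that $G[B]$ has no triangular face: a $3$-cycle bounding a face of $G[B]$ encloses no big vertex, so in $G$ it either bounds a face — an all-big facial $3$-cycle, excluded — or it encloses small vertices which, by the same classification, must contain an all-small cycle, again excluded. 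Hence every face of $G[B]$ is a $4$-cycle, and Euler's formula for a $2$-connected quadrangulation gives exactly $|B|-2$ faces. This is the point of the numerator.

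Next I would establish the base partition, which should coincide with Corollary~\ref{corollary2.1}. A $2$-connected plane graph all of whose faces are $4$-cycles is bipartite; let $\{P,Q\}$ be its bipartition. Both parts are edgeless and hence induce forests in $G[B]$. Moreover $\{P,Q\}$ is compatible with $\mathcal B$ in the sense of Definition~\ref{definition 1.1}: in any proper $3$-colouring of a $4$-cycle at least one diagonal pair is monochromatic (with only three colours, two non-monochromatic diagonals would force the remaining two vertices to receive the third colour, hence to be equal), and in the bipartition each diagonal lies inside a single part, so that monochromatic diagonal witnesses compatibility at every face. By Theorem~\ref{theorem1.1} this already furnishes one Hamilton cycle of $G^{*}$, the $k=0$ instance.

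To reach $2^{k}$ cycles I would perturb $\{P,Q\}$ by moving individual big vertices across the bipartition. Moving a vertex $v$ keeps both parts forests (the side losing $v$ stays edgeless, the side gaining $v$ acquires only a star), and checking Definition~\ref{definition 1.1} shows the move preserves compatibility exactly when, at every face incident with $v$, the two face-neighbours of $v$ have equal colour, i.e. $v$ lies on the non-monochromatic diagonal of each of its faces. I would then select $k$ such ``safe'' toggles that do not interfere, one corner from each member of a family of pairwise vertex-disjoint faces chosen as an independent set in the face-adjacency graph of $G[B]$. Since each $4$-face has four corners, each of degree at most $\Delta(G[B])$, a face meets at most $4\Delta(G[B])-8$ other faces, so this graph has an independent set of size at least $\lceil(|B|-2)/(4\Delta(G[B])-7)\rceil=k$. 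The $2^{k}$ subsets of the chosen toggles then yield $2^{k}$ distinct compatible forest partitions, and — since a partition of $B$ determines the partition of $V(G)$ and hence the dual Hamilton cycle — $2^{k}$ distinct Hamilton cycles of $G^{*}$.

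The main obstacle is the genuine independence of the toggles, which forces the exact denominator. Moving several safe vertices simultaneously can create a cycle on the receiving side the moment two of them are opposite corners of a common face, and more subtly through a longer alternating cycle, so forest-preservation has to be controlled \emph{jointly} with compatibility; doing so while keeping the interference bound linear in $\Delta(G[B])$ (rather than the quadratic bound one would get by naively forbidding all shared neighbours) is the delicate accounting that produces precisely $4\Delta(G[B])-7$. I expect the remaining work to lie in verifying that safe corners are always available on each selected face, in treating the tight small cases (such as $G[B]$ a single $4$-cycle, where the two faces coincide as cycles and the counting must be argued more globally), and in confirming that distinct $B$-partitions really do induce distinct Hamilton cycles.
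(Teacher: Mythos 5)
Your first two paragraphs reproduce the paper's setup faithfully: the hypothesis forces every face of $G[B]$ to be a $4$-cycle, Euler's formula gives $|B|-2$ faces, the face-adjacency graph has maximum degree at most $4\Delta(G[B])-8$ and hence an independent set of $k$ pairwise disjoint faces, and the bipartition of $G[B]$ is a compatible forest partition (this is Corollary~\ref{corollary2.1}). The divergence, and the gap, is in where you locate the $2^{k}$ degrees of freedom. You try to toggle \emph{big} vertices across the bipartition of $B$, which requires (i) that each selected face have a ``safe'' corner $v$ such that at \emph{every} face incident with $v$ the opposite diagonal is monochromatic in the $3$-colouring --- you never show such a corner exists, and nothing in the hypotheses guarantees it --- and (ii) that $k$ simultaneous toggles jointly preserve the forest condition, which you yourself observe can fail (two toggled vertices with two common neighbours create a $4$-cycle on the receiving side) and explicitly leave unresolved. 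As written, the argument does not close; the ``delicate accounting'' you defer is precisely the content of the proposition.

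The paper sidesteps all of this by keeping the partition of $B$ \emph{fixed} (the bipartition $t$) and instead varying the extension of $t$ to the \emph{small} vertices of $G$. Each of the $k$ disjoint facial $4$-cycles $f_i=a_ib_ic_id_ia_i$ of $G[B]$ arises from a path $P_i\in{\cal P}_G$ of small inner vertices with $V_0(P_i)=\{b_i,d_i\}$ and $t(b_i)=t(d_i)$; one may colour $I(P_i)$ either entirely with the opposite colour, or with the opposite colour except at one chosen vertex, and both choices keep each colour class a forest. Because the faces $f_i$ are pairwise disjoint, the sets $I(P_i)$ are pairwise disjoint, so the $2^{k}$ combinations are automatically independent and yield $2^{k}$ distinct forest partitions of $V(G)$, hence $2^{k}$ Hamilton cycles --- no interference analysis is needed. (Your aside that ``a partition of $B$ determines the partition of $V(G)$'' is also false for exactly this reason: the extension to the small vertices involves genuine choices, and that freedom is the whole point.) If you want to salvage your route, you would need to prove the existence and joint admissibility of safe big-vertex toggles, which looks substantially harder than the intended argument; the natural fix is to move the perturbation from $B$ to the interiors $I(P_i)$. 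The degenerate case $G[B]$ a single $4$-cycle also needs the separate treatment the paper gives it.
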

We next prove the following theorem for some subfamily of  all $3$-connected graphs of $\Theta$.
\begin{theorem}\label{theorem1.2} Let $J \in {\Theta}$ be $3$-connected  and suppose that ${\cal J} =\{J_{1}, J_{2}, J_{3}\}$ is a partition of $V(J)$ into three independent sets. Assume that vertices of   $V(J)$ satisfy  the following conditions:
\begin{enumerate}
\item[($a_1$)] every vertex of $J_{1} \cup J_{2}$ is of degree at most $4$,
\item[($a_2$)] if $v \in J_1$ and $v_1$, $v_2$, $v_3$, $v_4$ are its consecutive neighbours in their natural cyclic order around $v$, such that $v_1$, $v_2 \in J_3$ and $v_3$, $v_4 \in J_2$, then $v_1$ or $v_2$ is of degree at most $4$. 
\end{enumerate}
 Then there exists an independent set $I \subseteq J_{1} \cup J_{2}$ such that ${\cal K} = \{(J_{1} \cup J_{2})\setminus  I, J_{3}\cup I\}$ is a partition of  $V(J)$ and each part of ${\cal K}$ induces a forest in $J$.
\end{theorem}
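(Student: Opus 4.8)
The plan is to produce $I$ by an extremal (maximality) argument rather than by induction on $|V(J)|$, since deleting or contracting a vertex need not preserve membership in $\Theta$, $3$-connectivity, or the hypotheses $(a_1)$--$(a_2)$. Write $A=(J_1\cup J_2)\setminus I$ and $B=J_3\cup I$. Since $J_1,J_2,J_3$ are independent, $B$ is bipartite with parts $J_3$ and $I$, and $A$ is bipartite with parts $J_1\setminus I$ and $J_2\setminus I$; thus ``$B$ is a forest'' means exactly that the bipartite graph $J[J_3\cup I]$ is acyclic, and ``$A$ is a forest'' means that $J[J_1\cup J_2]$ becomes acyclic after deleting $I$. (By Lemma~\ref{lemma1.1} any such $\cal K$ is automatically compatible with $\cal J$, so the entire content is the two forest conditions.)

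First I would fix an inclusion-maximal $I$ in the family $\cal F$ of all independent sets $I\subseteq J_1\cup J_2$ for which $J[J_3\cup I]$ is a forest; this family is nonempty since $\emptyset\in\cal F$ (the set $J_3$ is independent). For $u\in A$, the set $I\cup\{u\}$ again lies in $\cal F$ precisely when (i) $u$ has no neighbour in $I$, and (ii) the neighbours of $u$ in $J_3$ lie in pairwise distinct components of the forest $J[J_3\cup I]$, so that attaching $u$ creates no cycle. Maximality of $I$ therefore says that every vertex of $A$ fails (i) or (ii), and the goal is to show that under this maximality $A$ is already a forest.

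Suppose not, and choose a cycle $C$ of $A$ bounding a region $\Lambda$ of the plane whose open interior contains no vertex of $A$; then the interior of $\Lambda$ carries only vertices of $B=J_3\cup I$. Every vertex of $C$ lies in $J_1\cup J_2$ and hence, by $(a_1)$, has degree at most $4$; as two of its neighbours lie on $C$, it has at most two further neighbours, and these account for all possible blocking. A vertex blocked only by (ii) must then have exactly two off-$C$ neighbours, both in $J_3$ and both in one component of $B$; when such a vertex lies in $J_1$ its rotation is precisely the configuration of $(a_2)$ (two consecutive $J_3$-neighbours and two consecutive $J_2$-neighbours), so $(a_2)$ supplies a $J_3$-neighbour of degree at most $4$. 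Tracing the components of the forest $B$ that meet $V(C)$ through the interior of $\Lambda$, and using planarity to see that two $J_3$-neighbours of a $C$-vertex lying in a common component force a $B$-path inside $\Lambda$ joining them, I would show that the blockings cannot be consistent all the way around $C$: some vertex of $C$ is adjacent to no vertex of $I$ and has its $J_3$-neighbours in distinct components, i.e.\ is addable. This contradicts maximality, so $A$ is a forest and $I$ is the desired set.

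The main obstacle is exactly this last step: ruling out that every vertex of $C$ is simultaneously blocked. The two blocking types interact through the planar embedding of the forest $B$ inside $\Lambda$, and the delicate point is a $J_1$-vertex flanked by two $J_3$-neighbours lying in a common $B$-tree and two $J_2$-neighbours---the one rotation that a naive degree count cannot dispose of. Hypothesis $(a_2)$ is tailored to this case: the low-degree $J_3$-neighbour it provides limits how the $B$-tree can close up inside $\Lambda$ and ultimately frees a vertex of $C$ for addition to $I$. Carrying out this planar case analysis cleanly, and organising it along the components of $B$ meeting $C$, is where essentially all the work lies.
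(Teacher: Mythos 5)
Your proposal is a plan, not a proof: the step you yourself flag as ``the main obstacle'' --- showing that for a maximal $I$ no innermost cycle $C$ of $A=(J_1\cup J_2)\setminus I$ can have all of its vertices blocked --- is precisely where the entire content of the theorem lives, and it is not carried out. Note that by Proposition~\ref{proposition1.2} the conclusion for general $J\in\Theta$ (without $(a_1)$--$(a_2)$) is equivalent to Barnette's conjecture, so any argument must make essential, quantitative use of the two hypotheses; gesturing at ``tracing the components of the forest $B$ through the interior of $\Lambda$'' does not yet do that. Concretely, after the (correct) reductions you make --- each vertex of $C$ has at most two off-$C$ neighbours by $(a_1)$; a degree-$4$ vertex of $J_1$ on $C$ with two \emph{non-consecutive} $J_3$-neighbours is automatically addable by the Jordan curve theorem; a vertex blocked only by (ii) has two consecutive $J_3$-neighbours joined by a $B$-path on one side of $C$ --- you are left with a genuinely global consistency question: every vertex of $C$ either has an $I$-neighbour off $C$ or carries such a $B$-``chord,'' and you must show these chords and $I$-attachments cannot tile the whole of $C$. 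Nothing in the proposal rules out a maximal $I$ for which they do; it is not even clear that \emph{every} maximal $I$ works (the paper's proof effectively performs local exchanges, suggesting that a single greedy maximal choice may need to be repaired). Until this lemma is proved, the proposal establishes nothing beyond the easy setup.

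For comparison, the paper takes a different and much more hands-on route: it defines an explicit initial (``preliminary'') $3$-colouring from $\cal J$, isolates the degree-$4$ vertices with two non-consecutive $J_3$-neighbours as a third colour $\gamma$ to be resolved at the very end by the Jordan-curve observation you also noticed, and then iteratively destroys $\beta$-cycles one at a time (Lemmas~\ref{lemma4.3} and~\ref{lemma4.6}) while maintaining the invariants of Definitions~\ref{definition4.3} and~\ref{definition4.5} (at most one ``bad'' vertex). Hypothesis $(a_2)$ enters exactly where you predicted it must --- it supplies a low-degree $J_3$-vertex $b$, and Lemma~\ref{lemma4.5} then locates a vertex $w$ on the offending $\beta$-cycle not adjacent to $b$ that can be safely recoloured. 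If you want to salvage your extremal approach, you would need an analogue of Lemmas~\ref{lemma4.2}--\ref{lemma4.5} phrased for the innermost cycle $C$ and the forest $B$ inside $\Lambda$; at that point the two proofs would likely converge in length and difficulty.
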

In view of Lemma \ref{lemma1.1} the partition $\{(J_{1} \cup J_{2})\setminus  I, J_{3}\cup I\}$ considered in Theorem~\ref{theorem1.2} is compatible with ${\cal J} =\{J_{1}, J_{2}, J_{3}\}$. Using Theorem \ref{theorem1.2} we  prove the following theorem for all graphs $J \in \Theta$ (not necessary $3$-connected) with  ${\Delta}(J)\leqslant 4$. The proofs of Theorem \ref{theorem1.2} and Theorem \ref{theorem1.3} are presented in Chapter $5$.
\begin{theorem}\label{theorem1.3} Let $J \in {\Theta}$ with ${\Delta}(J)\leqslant 4$ and suppose that ${\cal J}$  is a partition of $V(J)$ into three independent sets. Then there exists a partition ${\cal K}$ of  $V(J)$  into two subsets which is compatible with  $\cal J$ and each part of ${\cal K}$ induces a forest in $J$.
\end{theorem}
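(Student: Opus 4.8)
The plan is to induct on $|V(J)|$, using Theorem \ref{theorem1.2} to settle the $3$-connected case and a decomposition along $2$-cuts to reduce the general $2$-connected case to it. First I would record that when ${\Delta}(J)\leqslant 4$ the hypotheses $(a_1)$ and $(a_2)$ of Theorem \ref{theorem1.2} hold vacuously: $(a_1)$ because every vertex already has degree at most $4$, and $(a_2)$ because its conclusion (that $v_1$ or $v_2$ have degree at most $4$) is then automatic. Hence if $J$ is $3$-connected, Theorem \ref{theorem1.2} directly produces an independent set $I\subseteq J_{1}\cup J_{2}$ for which both parts of ${\cal K}=\{(J_{1}\cup J_{2})\setminus I,\,J_{3}\cup I\}$ induce forests, and by Lemma \ref{lemma1.1} this ${\cal K}$ is compatible with ${\cal J}$. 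Since Lemma \ref{lemma1.1} always secures compatibility, the whole theorem reduces to the following: for every $J\in\Theta$ with ${\Delta}(J)\leqslant 4$ there is an independent set $I\subseteq J_{1}\cup J_{2}$ such that $(J_{1}\cup J_{2})\setminus I$ and $J_{3}\cup I$ both induce forests.

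For the inductive step assume $J$ is $2$-connected but not $3$-connected and pick a $2$-cut $\{x,y\}$. A rotation argument shows that if $x,y$ are non-adjacent then the two sides of the cut are separated by $4$-faces having $\{x,y\}$ as a diagonal (a separating face incident to both $x$ and $y$ cannot be a triangle, as that would force the edge $xy$), while if $x,y$ are adjacent the two faces at the edge $xy$ play this role. I would split $J$ into two graphs $A$ and $B$, each consisting of one side of the cut together with $\{x,y\}$, and cap the opening so as to stay in $\Theta$: if $x$ and $y$ receive different colours under ${\cal J}$ I add the edge $xy$ (turning each separating $4$-face into a triangle), and if they receive the same colour I instead insert a single new degree-$2$ vertex $z$ adjacent to $x$ and $y$ (keeping the separating faces quadrilateral and three-colourable). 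Because each of $x,y$ loses at least one neighbour on the discarded side, the cap raises its degree by at most what was removed, so ${\Delta}(A),{\Delta}(B)\leqslant 4$; both graphs lie in $\Theta$, and once single-vertex bridges (degree-$2$ vertices whose two neighbours form the cut) are peeled off by hand, both are strictly smaller than $J$.

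Applying the inductive hypothesis to $A$ and $B$ yields independent sets $I_A,I_B$ giving forest partitions of each piece. Restricting these partitions to $V(J)$ and deleting the added edge or vertex keeps the forest property on each side, since an induced subgraph of a forest is a forest, and preserves independence and the inclusion $I\subseteq J_{1}\cup J_{2}$. The role of the cap is to block monochromatic cycles through the cut: in $A$ (respectively $B$) the vertices $x$ and $y$ become joined within a single part of ${\cal K}$ only through the added edge or through $z$, so inside $J$ no colour class connects $x$ to $y$ along either side alone; as any cycle meeting both sides must pass through $x$ and $y$, no monochromatic cycle can survive the gluing.

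The hard part will be making the two partial solutions agree on the cut. When a cut vertex lies in $J_{3}$ it is forced into $J_{3}\cup I$ in every piece, so agreement is free; but when it lies in $J_{1}\cup J_{2}$ its side is governed by whether it is placed in $I$, and the two inductive solutions need not make the same choice. I expect to handle this by strengthening the induction so that the sides of the (at most two) cut vertices may be prescribed in advance, and then verifying that this strengthened form is still delivered in the $3$-connected base case, exploiting both the freedom in the choice of $I$ within Theorem \ref{theorem1.2} and the freedom to let any one of the three colour classes play the distinguished role of $J_{3}$. Reconciling this prescription with the black-box use of Theorem \ref{theorem1.2} is the main obstacle of the proof.
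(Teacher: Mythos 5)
Your base case is fine: when $\Delta(J)\leqslant 4$ the hypotheses $(a_1)$ and $(a_2)$ of Theorem \ref{theorem1.2} are indeed vacuous, and Lemma \ref{lemma1.1} gives compatibility, so the $3$-connected case goes through exactly as in the paper. But the inductive step has a genuine gap, and you name it yourself: after splitting $J$ along a $2$-cut $\{x,y\}$ into capped pieces $A$ and $B$ and applying induction to each, you must make the two solutions agree at $x$ and $y$, and you only ``expect to handle this by strengthening the induction'' so that the parts of the cut vertices can be prescribed. That strengthened statement is never proved, and it is not extractable from Theorem \ref{theorem1.2} as stated (the theorem gives \emph{some} independent set $I$, with no control over whether a designated vertex of $J_1\cup J_2$ lands in $I$; permuting which class plays the role of $J_3$ gives only three options against the several prescriptions you would need, and also changes which facial $4$-cycles the resulting partition is compatible with). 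Two further problems compound this. First, your reduction of the whole theorem to the special form $\{(J_1\cup J_2)\setminus I,\;J_3\cup I\}$ is a strictly stronger claim than Theorem \ref{theorem1.3}; the paper never establishes it for non-$3$-connected $J$, and your induction needs it in that strong form on both pieces. Second, your ``cap blocks monochromatic cycles'' argument only works when the cap lies in the same part as $x$ and $y$: with an edge cap this is automatic, but with a degree-$2$ cap vertex $z$ the inductive solution on $A$ may place $z$ in the opposite part, in which case the forest condition on $A$ does not forbid an $x$--$y$ path of that colour inside $A-z$, and gluing two such paths produces a monochromatic cycle in $J$. Independence of $I_A\cup I_B$ across the cut (a vertex $x\in I_A$ with a neighbour in $I_B$ on the other side) is a further unchecked agreement condition.

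The paper takes a different and lighter route that avoids gluing altogether. It does not split $J$ into two inductive pieces; instead it locates a small removable configuration and deletes only that. If $J$ has a vertex of degree $2$ it deletes that vertex and invokes Lemma \ref{lemma3.2}; otherwise it finds a separating $4$-cycle $C_1=a_1b_1c_1d_1a_1$ with $int(C_1)$ minimal, observes that minimality forces $a_1,c_1$ to be adjacent vertices of degree $3$ with two common neighbours, deletes the path $a_1c_1$, and invokes Lemma \ref{lemma3.1}. Those two lemmas are precisely engineered to recolour the reinserted vertices so as to restore both the forest property and compatibility of the affected facial $4$-cycles, and this is exactly why the theorem is stated in terms of \emph{compatible} partitions rather than your special independent-set form: compatibility is a local condition that can be repaired when one or two vertices are put back, whereas your global form $\{(J_1\cup J_2)\setminus I, J_3\cup I\}$ cannot be repaired locally. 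To salvage your approach you would have to prove the prescribed-boundary strengthening, which is essentially a new and harder theorem.
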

If $G$ is an Eulerian plane triangulation  different from $3$-cycle, then, by the equivalence of conditions $(1)-(2)$, $G = C^{2l} \vee E^2$, for some $l \geqslant 2$, or $G$ has at least three big vertices. Certainly, if $G = C^{2l} \vee E^2$, then $G^{*}$ is hamiltonian. Recall that if $G$ has at least three big vertices, then $G[B] \in  {\Theta}$. Hence, from Theorem \ref{theorem1.3} and Theorem~\ref{theorem1.1}, we obtain immediately the following main result.
\begin{theorem}\label{theorem1.4}
Let $G$ be an Eulerian plane triangulation different from $3$-cycle. Then $G^{*}$ is hamiltonian if no vertex in $G$ has more than four big neighbours. 
\end{theorem}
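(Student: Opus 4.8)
The plan is to read the result off directly from Theorem~\ref{theorem1.1} and Theorem~\ref{theorem1.3}, so the whole argument should be a matter of checking that the hypotheses of those two theorems are met for the right choice of data; I do not expect any new combinatorial work. First I would invoke the equivalence of conditions $(1)$ and $(2)$ recorded in the introduction: since $G$ is an Eulerian plane triangulation different from the $3$-cycle, either $G = C^{2l} \vee E^2$ for some $l \geqslant 2$, or $G$ has at least three big vertices. I would treat these two cases separately.

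In the first case, $G = C^{2l} \vee E^2$, the graph $G^{*}$ is hamiltonian outright, as already noted in the text preceding the statement, so nothing remains to prove. The substantive case is the second one. Here $G$ has at least three big vertices, and by the remark following the equivalence of conditions $(2)$--$(4)$ we have $G[B] \in \Theta$. Taking the unique proper $3$-colouring $\{V_1, V_2, V_3\}$ of $G$ (Heawood), the triple ${\cal B} = \{B \cap V_1,\, B \cap V_2,\, B \cap V_3\}$ partitions $V(G[B]) = B$ into three sets, each independent in $G[B]$ since each $V_i$ is independent in $G$. Thus ${\cal B}$ is an admissible choice of ${\cal J}$ for Theorem~\ref{theorem1.3}.

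The one genuine observation is the translation of the degree hypothesis. For a big vertex $v$, its degree in $G[B]$ is exactly the number of its neighbours in $G$ that are themselves big, i.e. the number of big neighbours of $v$; hence the assumption that no vertex of $G$ has more than four big neighbours forces, in particular for every $v \in B$, that $\deg_{G[B]}(v) \leqslant 4$, and so $\Delta(G[B]) \leqslant 4$. With this bound in place, $J = G[B]$ satisfies the hypotheses of Theorem~\ref{theorem1.3} with ${\cal J} = {\cal B}$, which yields a partition ${\cal K}$ of $B$ into two parts that is compatible with ${\cal B}$ and in which each part induces a forest in $G[B]$. Applying the implication $(b) \Rightarrow (a)$ of Theorem~\ref{theorem1.1} with ${\cal L} = {\cal K}$ then shows that $G^{*}$ is hamiltonian, completing the second case.

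I do not anticipate a serious obstacle once Theorems~\ref{theorem1.1} and~\ref{theorem1.3} are available; the only point needing care is the identification of the degree of a big vertex in $G[B]$ with its number of big neighbours in $G$, which is precisely what lets the face-theoretic hypothesis of the statement feed into the bound $\Delta(G[B]) \leqslant 4$ required by Theorem~\ref{theorem1.3}. It is worth remarking that the hypothesis is stated for every vertex of $G$, whereas the argument uses it only for the big vertices.
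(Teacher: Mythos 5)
Your argument is correct and is essentially identical to the paper's own derivation: the same case split via the equivalence of conditions $(1)$--$(2)$, followed by applying Theorem~\ref{theorem1.3} to $J = G[B]$ with ${\cal J} = {\cal B}$ and then the implication $(b) \Rightarrow (a)$ of Theorem~\ref{theorem1.1}. Your explicit check that the hypothesis forces $\Delta(G[B]) \leqslant 4$ is the one translation step the paper leaves implicit, and you have it right.
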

Jensen and Toft (\cite{flobar11}, Problem $2.12$) showed that Barnette's conjecture is true if and only if for every $ 3$-colourable plane graph $H$ there exists a vertex partition into two subsets each of which induces a forest in $H$. This is a consequence of  the Stein \cite{flobar15} result and the following Kr\'{o}l \cite{flobar14} result: a plane graph is $3$-colourable if and only if it is a subgraph of some Eulerian plane triangulation. Chen, Raspaud and Wang \cite{flobar1} proved that for every plane graph without intersecting $3$-cycles there exists such partition. In the following Proposition \ref{proposition1.2} we show that Barnette's conjecture is connected with the family $\Theta$. Its proof is contained in Chapter $3$.
\begin{proposition} \label{proposition1.2}
Barnette's conjecture is true if and only if for every $J \in {\Theta}$ and for each partition  $\cal J$ of $V(J)$ into three independent subsets (possibly one of which is empty) there exists a partition $\cal K$ of $V(J)$ into two subsets which  is compatible with $\cal J$ and each part of $\cal K$  induces a forest in $J$. 
\end{proposition}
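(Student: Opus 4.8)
The plan is to read both implications off the chain of reformulations assembled in the introduction. By Stein \cite{flobar15}, Barnette's conjecture is equivalent to the assertion that $G^{*}$ is hamiltonian for every Eulerian plane triangulation $G$ different from the $3$-cycle; for $G=C^{2l}\vee E^{2}$ this is automatic, and in the remaining case $G$ has at least three big vertices, so that $G[B]\in\Theta$ and Theorem \ref{theorem1.1} expresses hamiltonicity of $G^{*}$ through the existence of a partition of $B$ compatible with $\mathcal{B}=\{B\cap V_{1},B\cap V_{2},B\cap V_{3}\}$ whose two parts induce forests in $G[B]$. Thus the proposition amounts to matching the pairs $(G[B],\mathcal{B})$ against the pairs $(J,\mathcal{J})$.

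The implication ``$\Leftarrow$'' is then immediate. Assume the stated property and let $G$ be an Eulerian plane triangulation different from the $3$-cycle. If $G=C^{2l}\vee E^{2}$ then $G^{*}$ is hamiltonian; otherwise $G[B]\in\Theta$ and $\mathcal{B}$ is a partition of $V(G[B])$ into three independent sets, so applying the hypothesis to $(J,\mathcal{J})=(G[B],\mathcal{B})$ yields a compatible forest-partition of $B$, whence $G^{*}$ is hamiltonian by Theorem \ref{theorem1.1}. As $G$ was arbitrary, Barnette's conjecture holds.

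For ``$\Rightarrow$'' I would, given $(J,\mathcal{J})$ with $\mathcal{J}=\{J_{1},J_{2},J_{3}\}$, realise it as $(G[B],\mathcal{B})$ for an Eulerian plane triangulation $G$ and then apply Barnette's conjecture together with Theorem \ref{theorem1.1}. I would triangulate $J$ using only new vertices of degree $4$: since every facial $4$-cycle carries, in a proper $3$-colouring, at least one monochromatic diagonal, I would fill it by a path of small vertices along the bichromatic diagonal, taking the monochromatic diagonal as the two lateral neighbours of the path (the parity of the length being forced by $\mathcal{J}$, while longer paths of the same parity remain admissible), and I would fill each facial $3$-cycle either by leaving it as a face or by inserting a small triangle. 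Every insertion extends the colouring uniquely, so $G$ is a $3$-colourable plane triangulation and hence Eulerian (Heawood \cite{flobar9}); by construction the new vertices are small and $G[B]=J$ with $\mathcal{B}=\mathcal{J}$. Barnette's conjecture applied to $G^{*}\in\mathcal{P}$ and Theorem \ref{theorem1.1} now return a partition of $V(J)$ compatible with $\mathcal{J}$ into two forest-inducing parts.

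The hard part will be the realisation step, namely forcing every vertex of $J$ to be big, i.e.\ of degree at least $6$ in $G$. A lateral neighbour of an inserted path, or a corner of a $3$-cycle carrying a small triangle, can be driven to arbitrarily high degree; but a low-degree vertex that $\mathcal{J}$ forces to be a path \emph{endpoint} in each of its incident $4$-cycles receives only $+1$ per face and cannot be inflated. The smallest such obstruction is a vertex of degree $2$ and colour $\alpha$ whose two neighbours share a colour $\beta$ and whose two opposite corners share the remaining colour $\gamma$; such a pair $(J,\mathcal{J})$ is genuinely non-realisable and has to be disposed of directly. Here Lemma \ref{lemma1.1} is the right tool, since it makes every partition of the form $\{(J_{1}\cup J_{2})\setminus I,\ J_{3}\cup I\}$ with $I\subseteq J_{1}\cup J_{2}$ independent automatically compatible with $\mathcal{J}$, so that only the forest condition must be arranged by hand in the degenerate configurations. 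Isolating these configurations and showing that the construction covers all the remaining pairs is where the real effort goes.
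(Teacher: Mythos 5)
Your overall architecture coincides with the paper's: the backward implication is read off from the implication $(b)\Rightarrow(a)$ of Theorem \ref{theorem1.1} exactly as you describe, and for the forward implication the paper also realises a pair $(J,\mathcal{J})$ with $\delta(J)\geqslant 3$ as $(G[B],\mathcal{B})$ by inserting a path of one or two small vertices into each facial $4$-cycle (length chosen by the colours of the ends, the monochromatic diagonal becoming the lateral neighbours) and an octahedron-type gadget into each facial $3$-cycle; this is precisely the content of Lemma \ref{lemma3.3}, and your degree count showing why $\delta(J)\geqslant 3$ makes every old vertex big is the right one.

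The genuine gap is your treatment of the vertices of degree $2$. You correctly identify that such a pair $(J,\mathcal{J})$ need not be realisable as $(G[B],\mathcal{B})$, but you then propose to ``dispose of it directly'' via Lemma \ref{lemma1.1}, which only guarantees compatibility of a partition of the form $\{(J_{1}\cup J_{2})\setminus I,\ J_{3}\cup I\}$ with $\mathcal{J}$ and says nothing about either part inducing a forest; you give no mechanism for producing the forest partition in these configurations, and you explicitly defer ``the real effort'' to them. The paper closes this exactly where you stop: it runs an induction on $|V(J)|$, takes the $3$-cycle and the $4$-cycle as base cases (these are the only cycles in $\Theta$, so the degree-$2$ reduction never gets stuck), and for a non-cycle $J$ with a degree-$2$ vertex $u$ it applies the induction hypothesis to $J-u\in\Theta$ and then invokes Lemma \ref{lemma3.2}, which shows how to extend a compatible forest-partition of $V(J-u)$ across $u$ by a short case analysis on the types and colours of the two neighbours of $u$ and of the opposite vertices of the two facial $4$-cycles created at $u$. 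Without this reduction lemma (or an equivalent extension argument) your forward implication is not a proof: the degenerate configurations are not a finite list of exceptional graphs to be checked by hand but an unbounded family, since degree-$2$ vertices can occur anywhere in a graph of $\Theta$, and they must be peeled off recursively before the realisation step applies.
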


\section{Hamiltonicity of  $G^{*}$  in terms of the graph $G[B]$}

Let $G$ be an Eulerian plane triangulation different than $3$-cycle and suppose that  $\{V_{ 1}, V_{2}, V_{3}\}$ is the unique partition of $V(G)$ into three independent sets in $G$. Vertices belonging to $V_i$ are called of \textit{type~$i$}. Let  $B$ be the set of all big vertices in $G$. Then ${\cal B} = \{ B \cap V_{1},  B \cap V_{2},  B \cap V_{3}\}$ is a partition of $B$ into three independent sets in $G[B]$. Let ${\cal P}_G$ denote the family of all paths $P$ in $G$  in which all inner vertices are small and ends are big. 

By the mentioned Stein result, in order to prove Theorem \ref{theorem1.1}, it is enough to prove the following:
\begin{theorem}\label{theorem2.1}
Let $G$ be an Eulerian plane triangulation with at least three big vertices. The following conditions are equivalent:
\begin{enumerate}
\item[($a$)] there exists a partition  of  $V(G)$ into two subsets each of which induces a forest in $G$,
\item[($b$)] there exists a partition $\cal L$ of $B$ into two parts which is  compatible with $\cal B$ and each part of $\cal L$ induces a forest in $G[B]$.
\end{enumerate}
\end{theorem}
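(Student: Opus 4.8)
The plan is to prove the two implications separately, using throughout the correspondence between the facial $4$-cycles of $G[B]$ and the maximal paths of $\mathcal{P}_G$. Fix the unique $3$-colouring $\{V_1,V_2,V_3\}$ of $G$. For a maximal path $P = a\,s_1\cdots s_m\,c \in \mathcal{P}_G$ (ends $a,c$ big, inner vertices small), the excerpt tells us that the neighbour set $V_0(P)=\{b,d\}$ consists of two big vertices, that $abcd$ is a facial $4$-cycle of $G[B]$, and that each $s_i$ is adjacent precisely to $s_{i-1},s_{i+1},b,d$. I would first record the arithmetic of colours along such a strip: $a,c$ share a type $j$, the poles $b,d$ share a type $k$, the inner vertices alternate between the remaining type and type $j$, and consequently $m$ is odd. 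I would also observe that compatibility of a partition $\mathcal{K}$ with $\mathcal{B}$ localises to exactly the condition that for every facial $4$-cycle $abcd$ one of the diagonal pairs $\{a,c\}$, $\{b,d\}$ lies in a single part of $\mathcal{K}$, since each such pair already lies in one part of $\mathcal{B}$.

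For $(a)\Rightarrow(b)$ I would set $X=B\cap S$ and $Y=B\cap T$ from a forest partition $\{S,T\}$ of $V(G)$. Because $G[B]$ is induced, $G[B][X]=G[X]\subseteq G[S]$ is a subgraph of a forest, and likewise for $Y$; hence both parts induce forests in $G[B]$. For compatibility I argue by contradiction: if some facial $4$-cycle had $a,c$ in different parts and $b,d$ in different parts, say $a,b\in S$ and $c,d\in T$, then the triangles $b\,s_i\,s_{i+1}$ and $d\,s_i\,s_{i+1}$ force the inner vertices to alternate between $S$ and $T$, while the end triangles $a\,b\,s_1$ and $c\,d\,s_m$ force $s_1\in T$ and $s_m\in S$; as $m$ is odd this is impossible. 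This settles the forward direction.

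For $(b)\Rightarrow(a)$, the real work, I would extend a compatible forest partition $\{X,Y\}$ of $B$ to the small vertices strip by strip, the regime being governed by how many diagonal pairs are monochromatic (at least one, by compatibility). If exactly one pair is monochromatic, I would colour the inner vertices so that each colour class meets the strip in a forest and no two distinct big vertices become joined through the interior; a short check (all inner vertices opposite to coincident poles, or a strict alternation when the poles differ) confirms this. If both pairs are monochromatic, necessarily $\{a,c\}$ in one part and $\{b,d\}$ in the other, then every admissible colouring forces exactly one new interior connection: either $a$ to $c$ in their part (all inner vertices placed there) or $b$ to $d$ in theirs (a single inner vertex in the poles' part). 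So each such face demands the choice of one diagonal, to be adjoined to the corresponding forest.

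The main obstacle is to make these choices globally so that neither part acquires a cycle. Let $F_X,F_Y$ be the forests induced on $X,Y$; adjoining the chosen diagonals keeps the drawing planar, since each diagonal sits inside its own face. The crux is that for a both-monochromatic face $abcd$ it can never happen at once that $a,c$ are joined in the current $X$-forest and $b,d$ in the current $Y$-forest: an $a$--$c$ path through $X$, together with the diagonal $ac$ drawn inside the face, is a Jordan curve separating $b$ from $d$, so a $b$--$d$ path through $Y$ would be forced to cross it, contradicting planarity and the vertex-disjointness of the two oppositely coloured paths. Hence at every step at least one diagonal is safe, and a greedy pass over the both-monochromatic faces---reapplying this planar argument to the already augmented forests---completes the extension. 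Finally I would check that no monochromatic cycle survives: in each colour the big-vertex connectivity created through the strips is exactly that of the augmented forest $F_X^{+}$ (resp.\ $F_Y^{+}$), which is a forest by construction, while each strip contributes only tree-like attachments to big vertices; since cross-strip cycles must pass through big vertices, both colour classes induce forests in $G$, giving $(a)$.
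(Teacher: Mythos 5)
Your overall architecture is the paper's: restrict the forest partition to $B$ and use the alternation forced by the triangles $bs_is_{i+1}$ and $ds_is_{i+1}$ for the forward direction, then extend a compatible partition strip by strip for the converse, guarding the one dangerous case by a planarity argument. Your Jordan-curve observation in the both-monochromatic case is sound and in fact more explicit than the paper, which joins $b$ to $d$ only when they are not yet joined and merely asserts that the alternative choice is safe.

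The genuine gap is your reading of compatibility. You assert that the ends $a,c$ of a strip always share a type, hence that $m$ is odd, and you conclude that compatibility with $\mathcal{B}$ ``localises'' to the requirement that some diagonal pair of each facial $4$-cycle be monochromatic. This is false: in the facial $4$-cycle $abcda$ of $G[B]$ only $b,d$ are forced into the same part of $\mathcal{B}$, while $a$ and $c$ may lie in $V_1$ and $V_2$ respectively, which happens exactly when the strip has an even number of inner vertices (the construction in Lemma 3.3 inserts paths of length $3$ precisely to realise this). Definition 1.1 then demands for the pair $\{a,c\}$ either ``same type and same part of $\mathcal{K}$'' or ``different types and different parts of $\mathcal{K}$,'' so ``some diagonal monochromatic'' is neither necessary nor sufficient. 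Concretely: with $m$ even and $b,d$ in different parts, the forest condition forces $a,c$ into different parts (both diagonals bichromatic) --- a compatible configuration that contradicts the property you prove in the forward direction; conversely, $a,c$ of different types but equal colour with $b,d$ of different colours satisfies your criterion yet is not compatible, and there your ``strict alternation'' extension breaks, since $s_1$ must avoid $a$'s colour and $s_m$ must avoid $c$'s, which is inconsistent when $m$ is even and $a,c$ agree in colour. So the forward direction proves the wrong statement, and the backward direction's case split (``at least one diagonal monochromatic, by compatibility'') omits a legitimate compatible case. The repair is exactly the paper's bookkeeping: when $b,d$ receive different colours the inner vertices alternate, so $a$ and $c$ receive the same colour if and only if $m$ is odd, i.e.\ if and only if they have the same type; that type-sensitive equivalence is the compatibility statement you must both prove and use.
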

\begin{proof}
$(a) \Rightarrow (b)$
 Assume that $\{U, W\}$ is a partition of $V(G)$ such that both $U$ and $W$ induce a forest in~$G$. Let vertices of $U$ (or $W$) be coloured by $\alpha$ ($\beta$, respectively). Certainly, both $B \cap U$ and $B \cap W$ induce a  forest in $G[B]$. We prove that a partition $\{B~\cap U,  B \cap W \}$ of $B$ is compatible with $\cal B$.

Let  $C = abcda$ be a facial $4$-cycle in $G[B]$ and suppose that  $a, c$ are ends of some path $P \in {\cal P}_{G}$ and $\{b, d\} = V_{0}(P)$. Notice that vertices $b, d$ are of the same type. Hence, if vertices  $b, d$ have the same colour, then they both have the same type and colour. If they have distinct colours, then vertices of $P$ have two distinct colours alternately. Then, vertices $a, c$ have the same colour  if and only if they both have the same type. 

$(b) \Rightarrow (a)$  Let $\{X, Y\}$ be a partition of $B$ into two subsets each of which induces a forest in $G[B]$  and assume that  $\{X, Y\}$ is compatible with  the partition $\cal B$.  Let  $t_{0}\colon B \rightarrow \{\alpha, \beta\}$  be a $2$-colouring such that $t_{0}(v) =\alpha$ for $v \in X$ and $t_{0}(v) =\beta$ for $v \in Y$. 

Notice that we can assume that $G$ has no facial $3$-cycles whose vertices are all small in~$G$. Hence, each small vertex of $G$ is an inner vertex of a path belonging to ${\cal P}_{G}$. Let  ${\cal R}_{G} =\{P_{1}, \ldots, P_{n}\}$ be the set of all paths of ${\cal P}_{G}$ satisfying the following additional condition:  for each pair of paths of length $2$ having a common inner vertex only one of them belongs to~${\cal R}_{G}$.  Suppose that $I(P_i)$ is the set of all inner vertices of $P_i$. Then, we have  
\begin{enumerate}
\item[($1$)] $I(P_i) \cap I(P_j) = \emptyset$ for every two different paths $P_{i}$ , $P_{j}\in {\cal R}_{G}$, 
\item[($2$)] $V(G) = B \cup I(P_{1}) \cup\ldots \cup I(P_{n})$.
\end{enumerate}
 Assume that $T_{0} = B$ and $T_{i} = T_{i-1} \cup I(P_i)$, for $i =1$, \ldots, $n$. In view of $(1)$,  $T_{i-1}$ and $I(P_i)$ are disjoint. Using this, we will define a sequence of $2$-colourings $t_{i}\colon T_{i} \rightarrow \{\alpha, \beta\}$, for $1 \leqslant i \leqslant n$, such that $t_i$ is an extension of $t_{i-1}$ on the set $I(P_i)$, and the set $X_i$ (or $Y_i$) of all $\alpha$-vertices ($\beta$-vertices) belonging to $T_i$ induces a forest in $G$. 
 
Fix $0 \leqslant i < n$. Assume that a $2$-colouring $t_{i}$ is defined. 
Let $P_{i+1} = v_{1}\ldots v_{k}$ and suppose that $abcda$ is a  facial $4$-cycle in $G[B]$ such that  $a = v_1$, $c = v_k$ are ends of $P_{i+1}$ and $\{b, d\} = V_{0}(P_{i+1})$.  Then, vertices $b, d$ are of the same type and vertices of $P_{i+1}$ are of two  different types alternately. 

Let $t_{i}(b) \neq t_{i}(d)$. Since $\{X, Y\}$ is compatible with $\cal B$, $t_{i}(v_1)  = t_{i}(v_k)$ if and only if vertices $v_1$, $v_k$ are of the same type. Hence, $t_{i}(v_1) = t_{i}(v_k)$ if and only if $k$ is odd. Let us define a $2$-colouring $t_{i+1}\colon T_{i+1} \rightarrow \{\alpha, \beta\}$ as follows: $t_{i+1}(v_j) = t_{i}(v_1)$ for $j$ odd, and $t_{i+1}(v_j) \neq t_{i}(v_1)$ for $j$ even. Hence, $t_{i+1}(v_k) = t_{i}(v_1)$ if and only if~$k$ is odd. Thus, $t_{i+1}(v_k) = t_{i}(v_k)$. So, the extension $t_{i+1}$ of $t_{i}$ is well defined. 

Let now $t_{i}(b) = t_{i}(d) = \alpha$ (or $\beta$). If there exists a path  connecting vertices $b$ and  $d$ which is contained in a subgraph induced by~$X_{i}$ (by $Y_i$, respectively) in $G$, we put $t_{i+1}(v) \neq t_{i}(b)$  for every $v \in I(P_{i+1})$. If no such path exists, we choose a vertex $v_{0} \in I(P_{i+1})$ and put $t_{i+1}(v_0) =  t_{i}(b)$ and $t_{i+1}(v)  \neq t_{i}(b)$ for every $v \in I(P_{i+1}) \setminus  \{ v_{0}\}$.

In both cases $X_{i+1}$ (or $Y_{i+1}$) induces a forest in $G$. Notice that $T_{n} = V(G)$, by $(2)$. Hence, implication $(b) \Rightarrow (a)$ holds which completes the proof.
 \end{proof}
  Let $C$ be a cycle in $J$. The bounded (unbounded) region of $R_{2} \setminus  C$  is called  \textit{interior} (\textit{exterior}, respectively) of $C$.  We denote them by $int(C)$ and $ext(C)$.
\begin{corollary} \label{corollary2.1}
If each $3$-cycle in $G$ has both a small and a big vertex, then $B(G)$ is bipartite and $G^{*}$ is hamiltonian. 
 \end{corollary}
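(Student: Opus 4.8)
The plan is to interpret the hypothesis through the structural dictionary of the introduction and then to apply Theorem~\ref{theorem2.1} (equivalently Theorem~\ref{theorem1.1}). First I would separate off the degenerate possibility. Since $G$ is an Eulerian plane triangulation different from the $3$-cycle, either $G=C^{2l}\vee E^{2}$ for some $l\geqslant 2$, or $G$ has at least three big vertices. In the first case the hypothesis forces $l\geqslant 3$, because the remaining value $l=2$ gives the octahedron, every face of which is an all-small triangle; then the two apices are the only big vertices and they are non-adjacent, so $G[B]$ is a pair of isolated vertices, which is bipartite, and $G^{*}$ is hamiltonian by the remark in the introduction. Hence it remains to argue the case in which $G$ has at least three big vertices, so that $G[B]\in\Theta$ and the partition $\mathcal B=\{B\cap V_{1},B\cap V_{2},B\cap V_{3}\}$ is at hand.

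Next I would prove that $G[B]$ is bipartite. The decisive remark is that any $3$-cycle of $G[B]$ is a $3$-cycle of $G$ with all three vertices big, which the hypothesis forbids; therefore $G[B]$ has no $3$-cycle whatsoever. Every face of a graph in $\Theta$ is bounded by a $3$-cycle or a $4$-cycle, so every face of $G[B]$ must be a $4$-cycle. A $2$-connected plane graph all of whose face boundaries are even is bipartite, and this settles the first assertion.

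It then remains to exhibit a partition as in condition~$(b)$ of Theorem~\ref{theorem2.1}. I would take $\mathcal L=\{X,Y\}$ to be the bipartition of the connected graph $G[B]$; each class is independent, hence induces a forest. To see that $\{X,Y\}$ is compatible with $\mathcal B$ in the sense of Definition~\ref{definition 1.1}, I would check each facial $4$-cycle $abcda$ locally. As established in the proof of Theorem~\ref{theorem2.1}, such a face is associated with a path $P\in\mathcal P_{G}$ whose big ends are the diagonal pair $a,c$ and whose neighbour set $V_{0}(P)=\{b,d\}$ is the other diagonal pair, with $b$ and $d$ of the same type, i.e. in a common part of $\mathcal B$. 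Since $abcd$ is an even cycle, the bipartition places the non-consecutive vertices $b$ and $d$ in the same class of $\{X,Y\}$. Thus $\{b,d\}$ is a pair of non-consecutive vertices lying in a common part of both $\mathcal L$ and $\mathcal B$, which is exactly the compatibility condition for this face. As this holds for every facial $4$-cycle, $\mathcal L$ is compatible with $\mathcal B$, and Theorem~\ref{theorem2.1} (with Theorem~\ref{theorem1.1}) yields that $G^{*}$ is hamiltonian.

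The step I expect to be the main obstacle is the compatibility check, which rests on coordinating two facts: that each facial $4$-cycle of $G[B]$ carries a diagonal pair $V_{0}(P)$ that is monochromatic for $\mathcal B$, and that the bipartition, alternating around an even cycle, keeps that very pair together in one class. A subsidiary point I would want to confirm is that no $4$-face of $G[B]$ can bound a region of $G$ containing no small vertex, since such a triangulated region would force a chord joining two big vertices and contradict the fact that $abcd$ is a face; this guarantees that the associated path $P$ really exists. Once these local facts are secured the verification is face-by-face and needs no global bookkeeping.
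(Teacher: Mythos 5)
Your proof is correct and follows essentially the same route as the paper: dispose of the case $G=C^{2l}\vee E^{2}$, show in the remaining case that every face of $G[B]$ is a $4$-cycle so that $G[B]$ is bipartite, and check that the bipartition is compatible with $\mathcal{B}$ so that Theorem \ref{theorem1.1} applies. The only local differences are that you exclude $3$-cycles of $G[B]$ directly from the hypothesis (which concerns all $3$-cycles of $G$), whereas the paper rules out facial $3$-cycles of $G[B]$ by deriving an octahedron configuration and an all-small facial triangle, and that you identify the $\mathcal{B}$-monochromatic diagonal of each $4$-face via the path $P\in\mathcal{P}_{G}$, where the paper simply notes that the bipartition makes both diagonals of an even facial cycle monochromatic.
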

 \begin{proof} If $G = C^{2l} \vee E^2$, for some $l \geqslant 2$, then $G^{*}$ is hamiltonian. Hence, we can assume that $G \neq  C^{2l} \vee E^2$, for $l \geqslant 2$. 
 
We first prove that $B(G)$ is bipartite. By equivalence of conditions $(1)$ and $(3)-(4)$, $G[B]$ is $2$-connected and every facial cycle in $G[B]$ is a $3$-cycle or $4$-cycle, where $B$ is the set of all big vertices of $V(G)$. If $\lambda$ is a facial $3$-cycle in $G[B]$, then it is not a facial $3$-cycle in $G$, because $\lambda$ contains a small vertex in $G$. Hence, if $\lambda$ is a facial $3$-cycle in $G[B]$, then $int(\lambda)$ (or $ext(\lambda)$) contains a small vertex of $G$. Then, vertices of $\lambda$ and vertices belonging to $int(\lambda)$ ($ext(\lambda)$, respectively) induce the octahedron in $G$. Thus, $G$ contains a facial $3$-cycle with three small vertices, which is a contradiction. It follows that each facial cycle in $G[B]$ is a $4$-cycle. Therefore $G[B]$ is bipartite. 

Since $G[B]$ is bipartite, there exists a partition $\cal L$ of $B$ into two independent subsets such that each facial cycle in $G[B]$ has two pairs of non-consecutive vertices belonging to the same class of $\cal L$. Hence,~$\cal L$ is compatible with ${\cal B}$. In view of Theorem \ref{theorem1.1},  $G^{*}$ is hamiltonian.
\end{proof}
\begin{proof} [\bf  Proof of Proposition \ref{proposition1.1}]
Since $B(G)$ is bipartite, there exists a $2$-colouring $t\colon B \rightarrow \{\alpha, \beta\}$. Assume first that $G[B]$ is not a $4$-cycle. Let $J$ be a graph such that $V(J)$ is the set of all facial cycles of $G[B]$ and any two vertices of $J$ (say facial cycles $f_{1}$ and $f_{2}$ in $G[B]$) are adjacent if and only if $f_{1}$ and $ f_{2}$ have a common vertex. Since each facial cycle of $G[B]$ is a $4$-cycle and $G[B]$ is not a $4$-cycle, we have $\Delta(J) \leqslant 4\Delta(B)-8$.
  
Notice that, by Euler's Formula, $J$ has $|B|-2$ vertices and, by the greedy algorithm, $\chi(J) \leqslant\Delta(J)+1$, where $\chi(J)$ is the chromatic number of $J$. Hence,  there exists an independent set of vertices $K \subset V(J)$ which has at least
$$|K| \geqslant \frac{|J|}{\chi(J)} \geqslant \frac{|B|-2}{\Delta(J)+1} \geqslant  \frac{|B|-2}{4\Delta(B)-7}$$
vertices. It follows, that there exists a family ${\cal K }= \{f_{1}, \ldots, f_{|K|}\}$ of facial $4$-cycles in $G[B]$ which are disjoint in pairs. For each $f_{i} \in {\cal K }$ there exists a path $P_{i} \in {\cal P}_{G}$ with ends $a_{i}, c_{i}$ and $V_{0}(P_{i}) = \{b_{i}, d_{i}\}$ such that $f_{i}= a_{i}b_{i}c_{i}d_{i}a_{i}$. Notice that $t(b_i) = t(d_i)$, for every $i =1$, \ldots,~$|K|$. If $t(b_i) = t(d_i) = \alpha$, we define two extensions $t_1$ and $t_2$ of $t$ on $I(P_i)$ in the following way: $t_{1}(v) = \beta$ for every vertex $v \in I(P_{i})$, $t_{2}(v) = \beta$ for every vertex $v \in I(P_{i}) \setminus \{v_0\}$ and $t_{2}(v_{0}) = \alpha$, where $v_0$ is a vertex of $I(P_i)$. It follows that there exist at least $2^{|K|}$ extensions of the colouring $t$ on $B \cup I(P_{1}) \cup\ldots \cup I(P_{|K|})$ such that for each of them the set of all $\alpha$-vertices ($\beta$-vertices) belonging to $B \cup I(P_{1}) \cup\ldots \cup I(P_{n})$ induces a forest in $G$. Since facial cycles of ${\cal K }$ are disjoint in pairs, there exist at least $2^{|K|}$ extensions of $t$ on $V(G)$ such that for each of them the set of all $\alpha$-vertices ($\beta$-vertices) belonging to $V(G)$ induces a forest in $G$. Hence, there exists at least~$2^{|K|}$ different Hamilton cycles in $G^{*}$. It is easy to see that if $G[B]$ is a $4$-cycle, then $G^{*}$ has at least~$10$ Hamilton cycles, because interior (exterior) of this cycle contains a path of the order at least $3$.
\end{proof}

\section{Partitions of $V(J)$ compatible with ${\cal J} = \{J_{1}, J_{2}, J_{3}\}$}
Let $J \in {\Theta}$ and suppose that  ${\cal J} = \{J_{1}, J_{2}, J_{3}\}$ is a partition of $V(J)$ into three independent sets.   Vertices belonging to $J_{i}$ are called of \textit{type $i$}. We first prove Lemma \ref{lemma1.1}. 

 \begin{proof} [\bf Proof of Lemma \ref{lemma1.1}]
Let $I \subseteq J_{1} \cup J_{2}$ be an independent set in $J$ and suppose that vertices of  $J_{3}\cup I$ (or $(J_{1} \cup J_{2})\setminus  I$) are coloured by $\alpha$ ($\beta$, respectively).  Let $C = abcda$ be a facial $4$-cycle in $J$. It is sufficient to prove that $C$ has two non-consecutive vertices which are of the same type and the same colour, or which are of distinct types and distinct  colours. Notice that $C$ has two non-consecutive vertices of the same type (say $a$ and $c$). Let us consider the following cases :
\begin{enumerate}
 \item[($i_1$)] vertices $a$, $c$  are of type $3$,
\item[($i_2$)] vertices $a$, $c$  are of type $1$,
\item[($i_3$)] vertices $a$, $c$  are of type $2$.
\end{enumerate}

Case $(i_1)$ Then $a$, $c$ are vertices of the same type and the  same colour.

Case $(i_2)$  If $a$, $c$ have distinct colours, then $a$, $c \notin J_{1} \setminus  I$. Hence one of vertices $a$, $c$ belongs to $I$. So, $b$, $d \in (J_{2}\setminus  I) \cup J_{3}$, because $I$ is independent in $J$ and $a$, $c$ are of type $1$. Hence, $b$ and $d$ both have the same type and  the same colour, or they are of distinct types and distinct colours.

The same proof as in Case $(i_2)$ remains valid for  Case $(i_3)$.
\end{proof}

We now prove the following two lemmas which will be used in the proof of Theorem \ref{theorem1.3}.
\begin{lemma}\label{lemma3.1} Assume that $P$ is a path in $J$  with ends of degree $3$, with all inner vertices of degree $4$ and such that its all vertices have exactly two common neighbours. If  $\{U, W\}$ is a partition of $V(J - P)$ which is compatible with $\{J_{1} \setminus  V(P), J_{2} \setminus  V(P), J_{3} \setminus  V(P)\}$ and both $U$ and $W$  induce a forest  in $J-P$, then there exists a partition $\cal K$ of $V(J)$ into two parts  which is compatible with $\cal J$ and each part of $\cal K$ induces a forest  in $J$.
\end{lemma}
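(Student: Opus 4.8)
The plan is to treat Lemma~\ref{lemma3.1} as a reduction step: starting from the good partition $\{U,W\}$ of $V(J-P)$, I reinsert the path $P=v_1\cdots v_k$ and decide, for each vertex $v_i$, which part to put it in, so that forests and compatibility are preserved. First I would fix the local picture. Here $k\ge 2$ (the two degree-$3$ ends already force two vertices, and exactly two common neighbours). Let $b,d$ be those two common neighbours. Every $v_i$ is adjacent to both $b$ and $d$, so each triple $bv_iv_{i+1}$ and $dv_iv_{i+1}$ bounds a facial $3$-cycle, the inner vertices $v_2,\dots,v_{k-1}$ have neighbours $v_{i-1},v_{i+1},b,d$ (degree $4$) and the ends $v_1,v_k$ have degree $3$. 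Since $J$ is properly $3$-coloured and $bv_iv_{i+1}$ is a triangle, $b$ and $d$ receive the same type (say type $3$) while the $v_i$ alternate between types $1$ and $2$. Hence the only faces of $J$ meeting $P$ that are $4$-cycles are the two end faces at $v_1$ and $v_k$; each is either a triangle (when $b,d$ bound a common face there) or a $4$-cycle $v_1bxd$, respectively $v_kbyd$, with $x,y\notin V(P)$ of type $1$ or $2$ opposite $v_1$, respectively $v_k$. All other faces incident with $P$ are triangles and impose no condition.

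I would then split on whether $b,d$ lie in the same part of $\{U,W\}$. If they do, say both in $U$, I put every $v_i$ in $W$; the vertices of $P$ form an induced path whose only neighbours outside $P$ are $b,d\in U$, so in $J$ they constitute a path-component of the subgraph induced by $W$, disjoint from $W\setminus V(P)$. Thus $W$ stays a forest and $U$ is unchanged, and compatibility is immediate, since each end $4$-cycle has the non-consecutive pair $\{b,d\}$ of equal type and equal part, while every other incident face is a triangle. If instead $b\in U$ and $d\in W$, I assign the $v_i$ alternately, $v_1,v_3,\dots$ to one part and $v_2,v_4,\dots$ to the other. As no two consecutive $v_i$ share a part, neither $bv_iv_{i+1}$ nor $dv_iv_{i+1}$ becomes monochromatic; each $v_i\in U$ is adjacent within $U$ only to $b$ and each $v_i\in W$ only to $d$, so both parts gain only leaves and remain forests. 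The only remaining task is compatibility at the end $4$-cycles.

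The heart of the argument is choosing the phase of this alternation. When both ends are $4$-cycles and $b,d$ lie in different parts, the diagonal $\{b,d\}$ (equal type, different part) fails compatibility of each end face, so I must make the other diagonal compatible: both $\{v_1,x\}$ and $\{v_k,y\}$ must be of equal type and part, or of different type and part. The key extra input is that deleting $P$ merges the whole fan of triangles and the two end $4$-cycles into one face of $J-P$ bounded by $b,x,d,y$ (the boundary edges surviving deletion are exactly $bx,xd,dy,yb$; the degenerate case $x=y$ is excluded for non-adjacent $b,d$, as it would give a bigon). Since $\{U,W\}$ is compatible with the restricted colouring, this quadrilateral $bxdy$ is compatible, and as $\{b,d\}$ is not, its diagonal $\{x,y\}$ must be. Now I use two parity facts: because the $v_i$ alternate in type in lockstep with their parts, the pair $\{v_1,v_k\}$ is compatible for either phase; and enforcing compatibility of $\{v_1,x\}$ pins down the part of $v_1$, hence the whole phase. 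Writing ``differ in part'' and ``differ in type'' as indicators that are additive along the chain $v_k,v_1,x,y$, and noting that compatibility of a pair means these two indicators agree, the compatibility of $\{v_1,v_k\}$, of $\{v_1,x\}$ and of $\{x,y\}$ forces compatibility of $\{v_k,y\}$. Thus one choice of phase satisfies both ends simultaneously.

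The main obstacle is precisely this crux: ruling out that the two end faces demand opposite phases. Everything hinges on identifying the merged quadrilateral $bxdy$ as a face of $J-P$ and on the parity bookkeeping linking its compatibility to the two end conditions. The remaining configurations are strictly easier: if an end face is a triangle it imposes no condition, so I satisfy the other end (or, if both are triangles, choose the phase arbitrarily). Finally I would observe that $\mathcal K$ agrees with $\{U,W\}$ off $V(P)$, so every face of $J$ not incident with $P$ retains its compatibility and its forest behaviour, which completes the verification that $\mathcal K$ is compatible with $\cal J$ and that each of its parts induces a forest in $J$.
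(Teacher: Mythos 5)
Your proof is correct and takes essentially the same route as the paper's: the same local picture (a fan of facial triangles along $P$ with two end $4$-faces $v_1bxd$ and $v_kbyd$ that merge into a single facial $4$-cycle $bxdy$ of $J-P$), the same extension of $\{U,W\}$ (all of $P$ into one part when $b,d$ agree, an alternating colouring with a chosen phase when they differ), and the same key step of using the compatibility of the merged face to transfer the condition from one end of $P$ to the other. The only differences are cosmetic: your mod-$2$ parity bookkeeping (valid because $v_1,v_k,x,y$ all avoid the type of $b$ and $d$) compresses the paper's explicit four-case analysis $(i_1)$--$(i_4)$, and both arguments pass silently over the degenerate configuration $x=y$.
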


\begin{proof}
Let $P$ be a path with ends (say $u$ and $w$) of degree $3$ in $J$, with all inner vertices of degree $4$ and such that its all vertices have exactly two common neighbours (say $b$ and $d$). Assume that $\{U, W\}$ is a partition of $V(J - P)$ into two parts  which is compatible with $\{J_{1} \setminus  V(P), J_{2} \setminus  V(P), J_{3} \setminus  V(P)\}$ and both $U$ and $W$  induce a forest  in $J-P$.  Let  $t\colon  V(J - P) \rightarrow \{\alpha, \beta\}$  be a $2$-colouring such that $t(v) =\alpha$ for $v \in U$ and $t(v) =\beta$ for $v \in W$.

Notice that vertices $b$ and $d$ are of the same type. Hence, they are not adjacent.  Then there exist the following facial $4$-cycles: $C = badcb$ in $J - P$, $C_{1} = badub$ in $J$ and $C_{2} = bwdcb$ in $J$. It is sufficient to extend $2$-colouring $t$ on $V(P)$ in such a way that the set of all $\alpha$-vertices and the set of all $\beta$-vertices of $V(J)$ induces a forest in $J$, and $C_1$ (or $C_2$) has two non-consecutive vertices which are either of the same type and the same colour or of distinct types and  distinct colours. Since $b$, $d$ are of the same type we can assume that they are of  distinct colours.  Let us consider the following cases:
\begin{enumerate}
\item[($i_1$)] vertices $a$, $u$, $w$ are of the same type, 
\item[($i_2$)] vertices $a$, $u$ are of  distinct types and vertices $u$, $w$ are of the same type,
\item[($i_3$)] vertices $a$, $u$ are of  the same type and vertices $u$, $w$ are of distinct types,
\item[($i_4$)] vertices $a$, $u$ are of  distinct types and vertices $u$, $w$ are of distinct types.
\end{enumerate}

Case $(i_1)$ We can assume that $a$, $b$ are coloured by $\alpha$ and  $d$ is coloured by $\beta$. We colour $u$ by $\alpha$ and vertices of $P$ by $\alpha$ and $\beta$ alternately. Then, vertices $a$, $u$, $w$ are of the same type and the same colour. Since  vertices $b$, $d$ are of the same type and distinct colours, vertices $a$, $c$ are of the same type and the same colour or they are of distinct types and distinct colours, because the partition $\{U, W\}$ of $V(J - P)$ is compatible with $\{J_{1} \setminus  V(P), J_{2} \setminus  V(P), J_{3} \setminus  V(P)\}$. If vertices $a$, $c$ are of the same type and the same colour (distinct types and distinct colours), then vertices $w$, $c \in V(C_2)$ are of the same type and the same colour (distinct types and distinct colours, respectively), because vertices $a$, $w$ are of the same type and the same colour. 

Case $(i_2)$ We can assume that $a$, $b$ are coloured by $\alpha$ and $d$ is coloured by $\beta$. We colour~$u$ by~$\beta$ and vertices of $P$ by $\beta$ and $\alpha$ alternately. Notice that vertices $a$, $u \in V(C_1)$ are of distinct types and distinct colours, and vertices $u$, $w$ are of the same type and the same colour. Hence, vertices $a$, $w$ are of distinct types and distinct colours. Since  vertices $b$, $d$ are of the same type and distinct colours, vertices $a$, $c$ are of the same type and the same colour or they are of distinct types and distinct colours, because the partition $\{U, W\}$ of $V(J - P)$ is compatible with $\{J_{1} \setminus  V(P), J_{2} \setminus  V(P), J_{3} \setminus  V(P)\}$. If vertices $a$, $c$ are of the same type and the same colour (distinct types and distinct colours), then vertices $w$, $c \in V(C_2)$ are of distinct types and distinct colours (the same type and the same colour, respectively), because vertices~$a$, $w$ have distinct types and distinct colours. 

The same proof as in Case $(i_1)$ (in Case $(i_2)$) remains valid for  Case $(i_3)$ (for Case $(i_4)$, respectively).
\end{proof}
\begin{lemma}\label{lemma3.2}   Assume that $J$ is not a cycle and  let $u \in V(J)$ be a vertex of degree $2$. If   $\{U, W\}$ is a partition of $V(J - u)$ which is compatible with $\{J_{1} \setminus  \{u\}, J_{2} \setminus   \{u\}, J_{3} \setminus   \{u\}\}$ and both $U$ and $W$ induce a forest in $J - u$,  then there exists a partition $\cal K$ of $V(J)$ into two parts which is compatible with  $\{J_{1}, J_{2}, J_{3}\}$ and each part of $\cal K$ induces a forest in $J$.
\end{lemma}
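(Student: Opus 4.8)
The plan is to extend the given two-colouring. Write $t\colon V(J-u)\to\{\alpha,\beta\}$ for the colouring with $t(v)=\alpha$ on $U$ and $t(v)=\beta$ on $W$, and let $a,c$ be the two neighbours of $u$. Since $u$ has degree $2$, both faces of $J$ incident with $u$ run through the path $a\,u\,c$; because $J\in\Theta$ is $2$-connected and is not a cycle, these two faces are distinct and at most one of them is a triangle. Hence either one face is a triangle $a\,u\,c\,a$ and the other a quadrilateral $a\,u\,c\,d\,a$ (Case A), or both are quadrilaterals $a\,u\,c\,b\,a$ and $a\,u\,c\,d\,a$ with $b\neq d$ (Case B). In either case, the faces of $J$ other than these two are exactly the faces of $J-u$ different from the face created by deleting $u$, so the hypothesis that $\{U,W\}$ is compatible with $\{J_1\setminus\{u\},J_2\setminus\{u\},J_3\setminus\{u\}\}$ already certifies them; likewise a colour class can acquire a cycle only through $u$. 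Thus I only have to choose $t(u)\in\{\alpha,\beta\}$ so that $u$ closes no monochromatic cycle and the facial $4$-cycle $a\,u\,c\,d\,a$ (and, in Case B, $a\,u\,c\,b\,a$) is compatible.

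Call the non-consecutive pair $\{a,c\}$ \emph{good} if its vertices have the same type and the same colour, or distinct types and distinct colours, and \emph{bad} otherwise. If $\{a,c\}$ is good, then every facial $4$-cycle through $u$ is compatible via the pair $\{a,c\}$ regardless of $t(u)$, so it remains only to colour $u$ with a colour differing from at least one of $t(a),t(c)$ — always possible for a degree-$2$ vertex — which keeps both classes forests. The substantive work is the bad case, and there I would exploit the face obtained by deleting $u$: in Case B the two quadrilaterals merge into a facial $4$-cycle $a\,b\,c\,d\,a$ of $J-u$, and in Case A into the facial triangle $a\,c\,d$ of $J-u$.

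For the bad case I split on the type of $\{a,c\}$. If $a,c$ have distinct types but equal colour (this subsumes Case A, where the edge $a\,c$ forces distinct types), then each of $b,d,u$ is adjacent to both $a$ and $c$ and hence carries the third type. The forest property of $\{U,W\}$ in $J-u$ forbids the triangle $a\,c\,d$ (Case A) or the $4$-cycle $a\,b\,c\,d\,a$ (Case B) from being monochromatic, so $t(d)\neq t(a)=t(c)$; in Case B the compatibility of the merged quadrilateral then forces $t(b)=t(d)$. Setting $t(u)=t(d)$ makes $\{u,d\}$ (and, in Case B, also $\{u,b\}$) good, hence $f_2$ (and $f_1$) compatible, while leaving $u$ isolated in its class since $t(u)\neq t(a)=t(c)$. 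If instead $a,c$ have the same type but distinct colours (possible only in Case B), then $u$ is automatically a leaf in whatever class it joins, so the forest condition is free; here compatibility of the merged quadrilateral forces $\{b,d\}$ to be good, and a short parity check shows that ``$\{b,d\}$ good'' is exactly the condition under which the colour of $u$ demanded by $f_1$ through $\{u,b\}$ and the colour demanded by $f_2$ through $\{u,d\}$ coincide; I set $t(u)$ to that common value.

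The main obstacle is precisely this reconciliation in the bad case: the faces $f_1$ and $f_2$ each impose a constraint on $t(u)$, and both must be satisfied simultaneously with the forest condition. Two facts make it go through: first, the proper $3$-colouring rigidly determines the types around $u$, so the constraints coming from $b$ and from $d$ are not independent; and second, applying the forest hypothesis for $\{U,W\}$ to the very cycle or triangle that $u$ spans in $J-u$ excludes exactly the monochromatic configuration that would otherwise destroy the forest property in $J$. Once $t(u)$ is fixed, the partition ${\cal K}=\{\,t^{-1}(\alpha),\,t^{-1}(\beta)\,\}$ is compatible with $\{J_1,J_2,J_3\}$ and has both parts inducing forests in $J$, as required.
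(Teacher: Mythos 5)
Your proposal is correct and follows essentially the same route as the paper: extend the given $2$-colouring to the degree-$2$ vertex $u$ by a case analysis on whether the non-consecutive pair formed by $u$'s neighbours is ``good'' or ``bad'', using compatibility and the forest property of the merged face of $J-u$ (your two bad subcases are exactly the paper's cases $(j_1)$ and $(j_2)$, and you treat the adjacent-neighbours case the paper dismisses as analogous). One cosmetic point: in Case B of the ``distinct types, equal colour'' subcase, non-monochromaticity of $a\,b\,c\,d\,a$ alone does not yield $t(d)\neq t(a)$ — you need $t(b)=t(d)$ from compatibility first and then combine — but both facts are invoked, so this is only an ordering slip, not a gap.
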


\begin{proof} 
Let $u$ be a vertex of degree $2$ in $J$. Since $J \in {\Theta}$ is not a cycle, $J-u \in {\Theta}$. Assume that $\{U, W\}$ is a partition  of $V(J - u)$ which is compatible with $\{J_{1} \setminus  \{u\}, J_{2} \setminus   \{u\}, J_{3} \setminus   \{u\}\}$ and both $U$ and $V$ induce a forest in $J - u$. Let  $t\colon  V(J - u) \rightarrow \{\alpha, \beta\}$  be a $2$-colouring such that $t(v) =\alpha$ for $v \in U$ and $t(v) =\beta$ for $v \in W$.

Let $b$, $d$ be two neighbours of $u$. Assume that $b$, $d$ are not adjacent (if vertices $b$, $d$ are adjacent the proof is analogous). Then there exist the following facial $4$-cycles: $C = badcb$ in $J - u$, $C_{1} = badub$ in $J$ and $C_{2} = budcb$ in $J$. It is sufficient to extend $2$-colouring $t$ on $\{u\}$ in such way that the set of all $\alpha$-vertices and the set of all $\beta$-vertices of $V(J)$ induces a forest in $J$, and $C_1$ (or~$C_2$) has two non-consecutive vertices which are either of the same type and the same colour or of distinct types and distinct colours. Let us consider the following cases:
\begin{enumerate}
 \item[($j_1$)] vertices $b$, $d$ are of the same type and distinct colours,
 \item[($j_2$)] vertices $b$, $d$ are of distinct types and the same colour.
 \end{enumerate}
 
 Case $(j_1)$ The same proof as in Cases $(i_1)-(i_2)$ of Lemma \ref{lemma3.1} remains valid for  Case $(j_1)$.

Case $(j_2)$  We can assume that vertices $b, d$ are coloured by $\alpha$. We colour $u$ by $\beta$. Since vertices $b$, $d$ are of distinct types, vertices $a$, $u$, $c$ are of the same type. Vertices $a$, $c$ are of the same colour, because the partition $\{U, W\}$ of $V(J - u)$ is compatible with  $\{J_{1} \setminus  \{u\}, J_{2} \setminus   \{u\}, J_{3} \setminus   \{u\}\}$.  Hence, $a$, $c$ are coloured by $\beta$ because $U$ induces a forest in $J - u$. Thus, vertices $a$, $u$, $c$ are of  the same type and the same colour, and the proof is complete.
\end{proof}

\begin{lemma}\label{lemma3.3} For every graph  $J \in \Theta$ with $\delta(J) \geqslant 3$ and for each partition $\cal J$ of $V(J)$ into three independent sets there exists an Eulerian plane triangulation $G$ such that  $J = G[B]$ and $\cal J = \{B \cap V_{1}, B \cap V_{2}, B \cap V_{3}\}$, where  $B$ is the set of all big vertices of $G$ and $\{V_{1}, V_{2}, V_{3}\}$ is the unique partition of $V(G)$ into three independent sets.
\end{lemma}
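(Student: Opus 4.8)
The plan is to construct $G$ by inserting new vertices into every face of the plane graph $J$ so as to triangulate each face, to extend the given $3$-colouring ${\cal J}$ over the inserted vertices, and then to obtain the Eulerian property for free. The key simplification is the Heawood result quoted in the introduction: a plane triangulation is $3$-colourable if and only if it is Eulerian, and in that case its proper $3$-colouring is unique. Thus I never have to control parities of degrees by hand; it suffices to exhibit a simple plane triangulation $G$ carrying a proper $3$-colouring that extends ${\cal J}$, and then the Eulerian property, together with the uniqueness of the partition $\{V_1,V_2,V_3\}$ and hence the identity ${\cal J}=\{B\cap V_1,B\cap V_2,B\cap V_3\}$, follows automatically.

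Next I would fix one filling template per face type. A triangular face $xyz$ is filled by an \emph{octahedral cap}: three new vertices $x',y',z'$ forming an interior triangle with $x'$ joined to $y,z$, with $y'$ joined to $x,z$, and with $z'$ joined to $x,y$. Giving $x',y',z'$ the colours of $x,y,z$ keeps the colouring proper, each new vertex has degree $4$, and each corner gains exactly $2$ neighbours. A quadrilateral face $abcda$ is filled by a \emph{fan}. First observe that in a proper $3$-colouring of a $4$-cycle at least one pair of opposite vertices is monochromatic; choose such a pair, say $\{b,d\}$, as apexes. The other pair $\{a,c\}$ becomes the ends of a path $a=v_1,v_2,\dots,v_k=c$ of new vertices, each inner $v_i$ joined to both $b$ and $d$, the path coloured by alternating the two colours other than the colour of $b,d$. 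Here $k\ge 3$ always (so the diagonal $ac$ is never created), and its parity is chosen so that the alternating colouring terminates at the prescribed colour of $c$; each inner $v_i$ then has degree $4$, each apex gains $k-2\ge 1$ neighbours, and each end gains exactly $1$.

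These fillings occupy pairwise disjoint open face-regions — including the outer face of $J$, which is itself a $3$- or $4$-cycle — so they do not interfere, and since the colours on the shared boundaries are those prescribed by ${\cal J}$, the local colourings patch to a proper $3$-colouring of a simple plane triangulation $G$. The degree bookkeeping is then immediate: every face incident with a vertex $v$ contributes at least $1$ to its degree (an octahedral cap contributes $2$, a fan contributes $k-2\ge 1$ at an apex and $1$ at an end), and $v$ lies on exactly as many faces as its degree in $J$, so $d_G(v)\ge 2\,d_J(v)\ge 6$ because $\delta(J)\ge 3$; hence every original vertex is big. Every inserted vertex has degree exactly $4$ and so is small. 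Therefore $B=V(J)$, and as no diagonal was ever added the only edges joining two vertices of $V(J)$ are the original edges of $J$, giving $G[B]=J$.

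The genuinely delicate step, where I would take most care, is the interior of a quadrilateral face, because two constraints are coupled there: the apex pair must be monochromatic, so that the path vertices (adjacent to both apexes) can avoid the apex colour, and the fan length must have the correct parity so that the alternating colouring ends at the prescribed colour of $c$ while excluding the forbidden diagonal. Both are settled by the elementary fact that every proper $3$-colouring of a $4$-cycle leaves a monochromatic pair of opposite vertices available, together with the freedom to lengthen the fan. Once these local choices are fixed, $G$ is a simple plane triangulation with at least four vertices, hence $3$-connected, and it carries a proper $3$-colouring; by the Heawood result it is an Eulerian plane triangulation whose unique $3$-colouring $\{V_1,V_2,V_3\}$ restricts on $B=V(J)$ to ${\cal J}$, which is exactly the assertion of the lemma.
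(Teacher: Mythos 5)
Your construction is essentially identical to the paper's: the paper also caps each triangular face with an octahedron (a new $3$-cycle $x'y'z'$ plus the $6$-cycle $xx'yy'zz'x$) and fills each quadrilateral face with a fan over the monochromatic opposite pair, using a path of length $2$ or $3$ according to whether the other pair is monochromatic, then extends the $3$-colouring and invokes Heawood to get the Eulerian property. Your version is correct; it merely spells out the degree bookkeeping (every old vertex gains at least one neighbour per incident face, so has degree at least $2\delta(J)\geqslant 6$, while every new vertex has degree $4$) that the paper leaves implicit.
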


\begin{proof}
Let $J \in \Theta$ with $\delta(J) \geqslant 3$ and suppose that ${\cal J} = \{J_{1}, J_{2}, J_{3}\}$ is a partition of $V(J)$ into three independent sets. Let $s\colon V(J) \rightarrow \{1, 2, 3\}$ be a proper $3$-colouring of $V(J)$ such that $s(v) = k$ for $v \in J_{k}$, $k = 1$, $2$, $3$. Let $C_{i} = a_{i}b_{i}c_{i}d_{i}a_{i}$, for $i = 1$, \ldots, $n$, be the family of all facial $4$-cycles in $J$.   Certainly, each~$C_i$ has two vertices (say $b_i, d_i$) which have the same colour.  For each $C_i$, $i = 1$, \ldots, $n$, we add to~$J$ a path $P_i$ of length $2$ connecting $a_i$ and $c_i$ if $s(a_i) = s(c_i)$ (of length $3$ if $s(a_i) \neq s(c_i)$). Next we add edges $b_{i}v$ and $d_{i}v$ for each inner vertex $v$ of $P_i$. Further, suppose that $D_{i}= x_{i}y_{i}z_{i}x_{i}$,  for $i = 1,\ldots, m$, is the family of all facial $3$-cycles in~$J$. For each $D_i$, $i = 1$, \ldots,  $m$, we add to $J$ a $3$-cycle $x'_{i}y'_{i}z'_{i}x'_{i}$ and also a $6$-cycle $x_{i}x'_{i}y_{i}y'_{i}z_{i}z'_{i}x_i$. Certainly, by the above operations we can extend graph $J$ to a plane graph $G$ such that $V(J) = B$, where $B$ is the set of all big vertices of $G$ (because $\delta(J) \geqslant 3$). Moreover, $J = G[B]$ (because $P_{i}$ is not an edge). Certainly, we can extend the colouring $s$ to the proper $3$-colouring of $V(G)$ because $s(b_i) = s(d_i)$ and the path $P_i$ has length $2$  (length $3$) if $s(a_i)= s(c_i)$ ($s(a_i) \neq s(c_i)$, respectively), for $i = 1$, \ldots, $n$.  Hence, $G$ is an Eulerian plane triangulation and ${\cal J} = \{B \cap V_{1}, B \cap V_{2}, B \cap V_{3}\}$, where $\{V_{1}, V_{2}, V_{3}\}$ is the unique partition of $V(G)$ into three independent sets.
  \end{proof}

Now we are ready to prove Proposition \ref{proposition1.2}.

\begin{proof} [\bf  Proof of Proposition \ref{proposition1.2}]

 $(\Rightarrow)$ Assume that Barnette's conjecture is true. Let $J \in \Theta$ and suppose that ${\cal J} = \{J_{1}, J_{2}, J_{3}\}$ is a partition of $V(J)$ into three independent sets. We prove that the following condition is satisfied:
\begin{enumerate}
\item[$(i)$] there exists a partition $\cal K$ of $V(J)$ into two subsets which  is compatible with $\cal J$ and each part of $\cal K$  induces a forest in $J$.  
\end{enumerate}
 We apply induction on the order of the graph. Certainly, if $J$ is a $3$-cycle or $4$-cycle, then condition $(i)$ holds. 
 
 If $J$ is not a cycle and it contains a vertex of degree $2$, then $J - v \in {\Theta}$. By induction hypothesis there exists  a partition $\cal F$ of $V(J-v)$ into two parts which is compatible with $\{J_{1} \setminus  \{u\}, J_{2} \setminus   \{u\}, J_{3} \setminus   \{u\}\}$ and each part of  $\cal F$  induces a forest in $J-v$. Hence, by Lemma \ref{lemma3.2}, condition $(i)$ holds.

Assume now that $\delta(J) \geqslant 3$. By Lemma \ref{lemma3.3}, there exists an Eulerian plane triangulation $G$ such that $J = G[B]$ and $\cal J = \{B \cap V_{1}, B \cap V_{2}, B \cap V_{3}\}$, where  $B$ is the set of all big vertices of~$G$ and $\{V_{1}, V_{2}, V_{3}\}$ is the unique partition of $V(G)$ into three independent sets. Since $G^{*}$ is hamiltonian, from the  implication $(a) \Rightarrow (b)$ of Theorem \ref{theorem1.1}, condition $(i)$ holds.

$(\Leftarrow)$ Certainly, from the  implication $(b) \Rightarrow (a)$ of Theorem \ref{theorem1.1}  follows the backward implication of Proposition \ref{proposition1.2}.
\end{proof}

\section {A good colouring}

 Let $J \in {\Theta}$ be $3$-connected  and suppose that ${\cal J} =\{J_{1}, J_{2}, J_{3}\}$ is a partition of $V(J)$ into three independent sets. We say that two (three or four) vertices are \textit{consecutive} neighbours of $v$ if  they are its neighbours in their natural cyclic order around $v$ (see  Diestel \cite{flobar2}, Proposition $5.1.2$). Assume that vertices of  $V(J)$ satisfy  the following conditions:
\begin{enumerate}
\item[($a_{1}$)] every vertex of $J_{1} \cup J_{2}$ is of degree at most $4$,
\item[($a_{2}$)]  if $v \in J_1$ and $v_1$,  $v_2$,  $v_3$,  $v_4$ are its consecutive neighbours such that $v_1$, $v_2 \in J_3$ and $v_3$, $v_4 \in J_2$, then $v_1$ or $v_2$ is of degree at most $4$.
\end{enumerate}

In this Chapter we will define a \textit{preliminary} $3$-colouring of $V(J)$ (non proper) arisen from ${\cal J}$ (see Definition \ref{definition4.1}). Next, we will define a $3$-colouring $t\colon V(J) \rightarrow \{\alpha, \beta,\gamma\}$ \textit{associated} with ${\cal J}$  (see Definition~\ref{definition4.3}) and a \textit{good} $3$-colouring of $V(J)$  (see Definition~\ref{definition4.5}) such that the set of all vertices coloured by $\alpha$ induces a tree in $J$ and  each vertex of degree $4$ is coloured by $\gamma$ if and only if it has exactly two non-consecutive neighbours belonging to  $J_3$. Moreover, the set of all vertices of $J_{1} \cup J_{2}$ coloured by $\alpha$ is independent in $J$. 

We generate  a sequence of good $3$-colourings of $V(J)$ (beginning from the preliminary colouring) which successively have smaller numbers of induced cycles coloured by $\beta$ in $J$  (see Lemma \ref{lemma4.3} and Lemma \ref{lemma4.6}). At every step we change colour $\beta$ and $\alpha$ of at  most three vertices of $V(J)$. Finally, we obtain a good $3$-colouring such that the set of all vertices coloured by $\beta$  induces a tree in $J$. It plays an essential role  in the proof of Theorem \ref{theorem1.2}. 

\begin{definition}\label{definition4.1}
Let $X_{1}$,  $X_{2}$, $X_{3}$ be subsets of $J_{1} \cup J_{2}$ defined successively in the following way:
\begin{enumerate}
\item[$ $] $v \in X_{1} \Leftrightarrow v \in J_{1}$ and it has at most one neighbour belonging to $J_3$,
\item[$ $] $v \in X_{2} \Leftrightarrow v\in J_{2}$,  it  has at most one neighbour belonging to $J_3$ and its any neighbour does not belong to $X_1$,
 \item[$ $] $v \in X_{3} \Leftrightarrow v$ is a vertex of degree $4$ having exactly two non-consecutive neighbours belonging to~$J_3$.
\end{enumerate}
We define the following colouring $p\colon V(J) \rightarrow \{\alpha, \beta, \gamma\}$ $($non proper$)$ called a \textit{preliminary} colouring of~ $V(J)$. We put
\begin{enumerate}
\item[$ $]
$p(v) = \alpha$ for $ v \in J_{3}\cup X_{1} \cup X_{2}$,
\item[$ $]
$p(v) = \beta$ for $v \in (J_{1}\cup J_{2}) \setminus  (X_{1} \cup X_{2}\cup X_{3})$,
 \item[$ $]
$p(v) = \gamma$ for $v \in  X_{3}$.
\end{enumerate}
\end{definition}

\begin{definition}\label{definition4.2}
Let $t\colon V(J) \rightarrow \{\alpha, \beta, \gamma\}$ be a colouring of $V(J)$. A subgraph of $J$ is called \textit{${\alpha}_t$-subgraph} (\textit{${\beta}_t$-subgraph}, \textit{${\gamma}_t$-subgraph}) if $t(v)=\alpha$   ($t(v) =\beta$, ${t(v) =\gamma}$, respectively) for every vertex $v$ of this subgraph. 

We say that a cycle is \textit{big} if it has at least six vertices. A $\beta_t$-cycle is called  \textit{independent} if it is disjoint with any other $\beta_t$-cycle. Let $B_t$ denote the set of all $\beta_t$-cycles contained in $J$.
\end{definition}

\begin{definition}\label{definition4.3}
We say that a colouring  $t\colon V(J)\rightarrow \{\alpha, \beta,\gamma\}$ (non proper) is \textit{associated} with ${\cal J} =  \{J_{1}, J_{2}, J_{3}\}$ if the following conditions are satisfied:
\begin{enumerate}
\item[($1$)] every $\beta_t$-cycle is a $\beta_p$-cycle,
\item[($2$)]  every $\alpha_t$-vertex of $J_1$ of degree $3$ (or $4$) has at most one neighbour (two neighbours, respectively) in $J_3$,   
\item[($3$)] if an $\alpha_t$-vertex of $J_1$ has two neighbours in $J_3$, then each of its $\beta_t$-neighbours does not belong to any $\beta_t$-cycle, 
\item[($4$)] every $\alpha_t$-vertex of $J_2$ has at most one neighbour in $J_3$,
\item[($5$)] every vertex of $J_3$
is an $\alpha_t$-vertex,
\item[($6$)] a vertex of $V(J)$ is a $\gamma_t$-vertex if and only if it is a   vertex of degree $4$ having exactly two non-consecutive neighbours belonging to $J_3$, 
\item[($7$)] the set of all $\alpha_t$-vertices of $J_{1} \cup J_{2}$  
is independent in $J$,
\item[($8$)] the set of all $\alpha_t$-vertices of $V(J)$ induces a forest.
\end {enumerate}
\end{definition}

Remark that the preliminary colouring of $V(J)$ is associated with ${\cal J}$  because the set of all $\alpha_p$-vertices induces a forest in $J$ which has no path of length $3$ (i.e., it is a ``star forest''). 

Notice that if  $t$ is associated with ${\cal J}$ then, by condition $(1)$  of Definition \ref{definition4.3}, every $\beta_t$-cycle is a $\beta_p$-cycle. Hence, each vertex of any $\beta_t$-cycle does not belong to  $X_{1} \cup X_{2}\cup X_3$.

\begin{definition}\label{definition4.4}
Let $C$ be an induced cycle in $J$. If $v\in V(C)$, then $d_{ext(C)}(v)$ ($d_{int(C)}(v)$ ) is the number of edges from $v$ to a vertex contained in $ext(C)$ ($int(C)$, respectively).
\end{definition}

\begin{remark}\label{remark4.1}
Whitney proved that any two planar embeddings of a $3$-connected planar graph are equivalent (Diestel \cite{flobar2}, theorem~4.3.2). 
Hence, if $C$ is an induced cycle in $J$ and b is any vertex not belonging to $V(C)$, then, we can assume that $b \in ext(C)$.
\end{remark}

\begin{remark}\label{remark4.2} Let $C = abcda$ be an induced cycle in $J$. Since $J$ is $3$-connected the following implications hold:
\begin{enumerate}
\item[($a$)] if $d_{int(C)}(a) = 0$, then $d_{ext(C)}(c)  \neq 0$,
\item[($b$)] if $d_{int(C)}(a) = d_{int(C)}(c) = 0$, then $C$ is a facial cycle.
\end{enumerate}
\end{remark}
\begin{lemma}\label{lemma4.1}  Let $t$ be a colouring of $V(J)$ associated with  ${\cal J} =  \{J_{1}, J_{2}, J_{3}\}$.
\begin{enumerate}
\item[($a$)] If $v \in J_1$ is a vertex of a $\beta_t$-cycle, then it  has two consecutive neighbours in $J_3$.
\item[($b$)] If $v \in J_2$ is a vertex of a $\beta_t$-cycle, then it  has a neighbour belonging to $X_1$ or it has two consecutive neighbours in $J_3$.
\item[($c$)] Every $\beta_t$-cycle of length $4$ is a facial cycle.
\item[($d$)] Every two different $\beta_t$-cycles  of length $4$ are disjoint.
\end{enumerate}
\end{lemma}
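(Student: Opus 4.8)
The whole lemma turns on condition $(1)$ of Definition \ref{definition4.3}: a vertex lying on a $\beta_t$-cycle is a $\beta_p$-vertex, hence lies in $(J_1\cup J_2)\setminus(X_1\cup X_2\cup X_3)$. Moreover, consecutive vertices of such a cycle lie in different classes of $\{J_1,J_2\}$ (by independence of the $J_i$ together with condition $(5)$, which makes every $J_3$-vertex an $\alpha_t$-vertex), so a $\beta_t$-cycle alternates between $J_1$ and $J_2$. The plan is to record these two facts once and then read off each part.

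For $(a)$, let $v\in J_1$ lie on a $\beta_t$-cycle. Its two cycle-neighbours lie in $J_2$, so they are not in $J_3$. Since $v\notin X_1$, the vertex $v$ has at least two neighbours in $J_3$, necessarily among its non-cycle neighbours; with $\deg v\le 4$ from $(a_1)$ this forces $\deg v=4$ and exactly two neighbours in $J_3$. As $v\notin X_3$, these two cannot be the non-consecutive pair, so they are consecutive, which is $(a)$. Part $(b)$ is the same move: for $v\in J_2$ on a $\beta_t$-cycle, $v\notin X_2$ means, by the definition of $X_2$, that $v$ either has a neighbour in $X_1$ --- the first alternative --- or has at least two neighbours in $J_3$, and then the argument of $(a)$ produces two consecutive ones.

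For $(c)$, take a $\beta_t$-cycle $C=abcda$ of length $4$; by alternation I may label $a,c\in J_1$ and $b,d\in J_2$, and then $C$ is induced, since opposite vertices share a class and so are non-adjacent. Applying the mechanism of $(a)$ to $a$ and to $c$ shows that each has degree $4$ with cycle-neighbours $b,d$ and two consecutive neighbours in $J_3$; consequently $b,d$ are themselves consecutive in the rotation at $a$ (the complement of a consecutive pair among four cyclic positions), so the two non-cycle edges at $a$ leave on a single side of $C$, and likewise at $c$. Hence $d_{int(C)}(a),d_{int(C)}(c)\in\{0,2\}$. By Remark \ref{remark4.1} I may assume the non-cycle edges at $a$ run into $ext(C)$, so $d_{int(C)}(a)=0$; Remark \ref{remark4.2}$(a)$ then gives $d_{ext(C)}(c)\ne 0$, forcing $d_{int(C)}(c)=0$, and Remark \ref{remark4.2}$(b)$ makes $C$ facial.

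For $(d)$, let $C\ne C'$ be $\beta_t$-cycles of length $4$, both facial by $(c)$, and suppose they meet. If they shared a vertex $u\in J_1$, then the only $\beta_t$-neighbours of $u$ are its two $C$-neighbours, since the other two neighbours lie in $J_3$ and are $\alpha_t$; thus $C'$ would use the same two consecutive edges at $u$ and, being the unique facial $4$-cycle in that angular sector, equal $C$ --- a contradiction. In particular $C,C'$ share no edge, as an edge carries a $J_1$-endpoint. A shared vertex $v\in J_2$ would then have four distinct $\beta_t$-neighbours in $J_1$, two from each cycle, so $\deg v=4$ with no neighbour in $J_3$; but $(b)$ would force a neighbour of $v$ into $X_1$, which is impossible because every neighbour of $v$ lies on a $\beta_t$-cycle and is therefore a $\beta_p$-vertex, hence outside $X_1$. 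So $C$ and $C'$ are disjoint.

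I expect $(c)$ to be the crux: the passage from the purely local rotation data at $a$ and $c$ to the global statement that $C$ bounds a face. Everything there hinges on the ``same side'' claim --- that the two non-cycle edges at each of $a,c$ leave $C$ on one side, so that $d_{int(C)}$ takes only the values $0$ or $2$ --- which is exactly what the consecutiveness supplied by $(a)$ guarantees, after which Remark \ref{remark4.2} closes the argument.
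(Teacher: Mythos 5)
Your proof is correct and follows essentially the same route as the paper: parts $(a)$ and $(b)$ are the same exclusion argument via $X_1$, $X_2$, $X_3$ and condition $(1)$ of Definition \ref{definition4.3}, and part $(c)$ is the same reduction to Remarks \ref{remark4.1} and \ref{remark4.2} using the consecutiveness of the $J_3$-neighbours. In part $(d)$ you merely reorganize the paper's terse case analysis (splitting by the class of a shared vertex rather than by the number of shared vertices) and fill in the details it leaves implicit; the underlying ingredients are identical.
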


\begin{proof}
Assume that $v \in J_1$ is a vertex of a $\beta_t$-cycle. Since  $v \notin X_{1}$, it  has two neighbours belonging to~$J_3$ (say $v_1$ and $v_2$). Hence, by condition $(a_1)$, $v$ is of degree $4$. Because $v \notin X_3$, $v_1$ and $v_2$ are consecutive neighbours of $v$. Hence, condition  ($a$) holds.

Suppose that $v \in J_2$ is a vertex of a $\beta_t$-cycle. Since $v \notin X_{2}$, it  has two neighbours belonging to $J_3$ or it has a neighbour belonging to $X_1$. Assume that $v$  has two neighbours belonging to $J_3$ (say $w_1$ and $w_2$). Hence, by condition $(a_1)$, $v$ is of degree $4$. Since $v \notin X_3$,  $w_1$ and $w_2$ are consecutive neighbours of $v$. Hence, condition  ($b$) holds.
 
Let $C = abcda$ be a $\beta_t$-cycle and suppose that $a$, $c \in J_1$. Since $J_2$ is an independent set in $J$, by condition $(a)$, $C$ is an induced cycle. In view of Remark \ref{remark4.1} and  condition $(a)$ we can assume that $d_{int(C)}(a)= 0$. Hence, by Remark \ref{remark4.2}$(a)$, $d_{int(C)}(a)  = d_{int(C)}(c) = 0$. Thus, by Remark \ref{remark4.2}$(b)$, $C$ is a facial cycle and condition ($c$) holds. 

Assume that there exist two different but not disjoint $\beta _t$-cycles of length $4$. By condition ($c$) they are facial cycles. If  they have at most two common vertices, then, by  conditions $(a)$ and $(b)$, we obtain a~contradiction. If they have three common vertices, then one of them has a vertex of degree $2$ which is a contradiction. Thus, condition ($d$) holds.
\end{proof}

\begin{lemma}\label{lemma4.2}
 Let $t$ be a colouring of $V(J)$ associated with ${\cal J}$ and suppose that $C$ is a big $\beta_t$-cycle. If $cde$ is a $\beta_t$-path, $c \in V(C)$ and $d \notin V(C)$, then  $c \in J_2$, $e \in V(C)$ and $cde$ is a path of a facial $\beta_t$-cycle.
 \end{lemma}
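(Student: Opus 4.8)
The plan is to first pin down the three colour classes that $c$, $d$, $e$ occupy, then to exhibit the facial $4$-cycle explicitly, using the structure of the big cycle $C$ together with Lemma \ref{lemma4.1}.

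First I would record the colour classes on $C$. Every vertex of $C$ is a $\beta_t$-vertex, so by condition $(1)$ of Definition \ref{definition4.3} it is a $\beta_p$-vertex and in particular lies in $(J_1\cup J_2)\setminus(X_1\cup X_2\cup X_3)$; by condition $(5)$ it is not in $J_3$. Hence $V(C)\subseteq J_1\cup J_2$, and since $J_1$, $J_2$ are independent, $C$ alternates between them. To locate $c$, suppose $c\in J_1$. By Lemma \ref{lemma4.1}$(a)$, $c$ has two consecutive neighbours in $J_3$; together with its two cycle-neighbours (which lie in $J_2$) this already gives $c$ four neighbours, so by $(a_1)$ these are all of them, and every neighbour of $c$ is either on $C$ or in $J_3$. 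As $J_3$-vertices are $\alpha_t$ by $(5)$, the $\beta_t$-vertex $d\notin V(C)$ cannot be adjacent to $c$, a contradiction. Thus $c\in J_2$, and then $d\in J_1$ and $e\in J_2$, because $d$, $e$ are $\beta_t$ (so not in $J_3$) while $J_2$ (resp. $J_1$) is independent.

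Next I would use $C$ to build the face. Since $c\in J_2\cap V(C)$ has its two cycle-neighbours $c_-,c_+\in J_1$ together with $d\in J_1$, it already has three neighbours in $J_1$, so by $(a_1)$ it has at most one further neighbour and hence at most one neighbour in $J_3$. Consequently the ``two consecutive neighbours in $J_3$'' alternative of Lemma \ref{lemma4.1}$(b)$ is impossible, and $c$ must instead have a neighbour in $X_1$; as $c_-,c_+$ are $\beta_p$ they are not in $X_1$, so this neighbour is $d$ itself or a fourth neighbour of $c$. Now apply Lemma \ref{lemma4.1}$(a)$ to the $J_1$-vertex $c_+\in V(C)$: it has degree $4$, with cycle-neighbours $c,c_{++}\in J_2$ and two consecutive neighbours in $J_3$, so $c$ and $c_{++}$ are consecutive at $c_+$ and bound a facial $4$-cycle $c\,c_+\,c_{++}\,w$, where $w$ is a common neighbour of $c$ and $c_{++}$. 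I would show that $w=d$ and that the prescribed vertex $e$ equals $c_{++}$; then $cde=c\,d\,c_{++}$ is a sub-path of the facial $4$-cycle $c\,d\,c_{++}\,c_+$, whose vertices are all $\beta_t$ because $c_+,c_{++}\in V(C)$, and $e=c_{++}\in V(C)$, as required. The symmetric argument runs on the $c_-$ side.

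The main obstacle is exactly this last identification, i.e. proving that the face obtained by following $cd$ closes up along $C$ rather than onto an off-cycle neighbour of $c$. Concretely, I must rule out the configuration in which $d$ carries an extra $J_2$-neighbour (so that the prescribed $e$ is an off-cycle $\beta_t$-neighbour of $d$ instead of $c_{\pm\pm}$), equivalently force $c$ and $e$ to be consecutive at $d$ and force the completing vertex of the face onto $V(C)$. The tools I expect to need here are the degree bound $(a_1)$ and condition $(6)$ of Definition \ref{definition4.3} (so that $d$, being $\beta_t$ and hence not $\gamma_t$, cannot simultaneously have degree $4$ and two non-consecutive neighbours in $J_3$, which sharply limits the cyclic arrangement of its neighbours), together with the $3$-connectivity facts recorded in Remark \ref{remark4.1} and Remark \ref{remark4.2}, which fix the planar embedding around $c$ and should guarantee that the face incident to the edge $cd$ on the side of $e$ is bounded by a cycle-edge at $c$. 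Once the completing vertex of the face is forced to be a cycle-neighbour $c_\pm$, the conclusion $e\in V(C)$ follows from Lemma \ref{lemma4.1}$(a)$ applied to $c_\pm$, whose only $J_2$-neighbours are its two neighbours on $C$.
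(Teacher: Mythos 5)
Your first half is fine and matches the paper: ruling out $c\in J_1$ via Lemma \ref{lemma4.1}$(a)$ and condition $(a_1)$ (all four neighbours of such a $c$ would lie in $J_3\cup V(C)$, leaving no room for $d$), and then reading off $d\in J_1$, $e\in J_2$ from independence and condition $(5)$, is exactly the paper's opening. The detour through Lemma \ref{lemma4.1}$(b)$ and $X_1$ is harmless but unused.

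The problem is that the lemma's actual content --- forcing the face at the edge $cd$ to close up on $V(C)$ --- is precisely the step you defer (``I would show that $w=d$''; ``The main obstacle is exactly this last identification''), and naming the tools is not the same as running the argument. What the paper does here is: place $d\in ext(C)$ (Remark \ref{remark4.1}), note $d_{ext(C)}(c)\leqslant 2$ by $(a_1)$, and split into two cases. If $d_{ext(C)}(c)=1$, then $d$ is consecutive around $c$ with \emph{both} cycle-neighbours $x,y$ of $c$, producing two facial $4$-cycles $cdx_1x$ and $cdy_1y$; condition $(6)$ forbids $x_1$ and $y_1$ from both lying in $J_3$ (else $d$ would have two non-consecutive $J_3$-neighbours and be a $\gamma_t$-vertex), and whichever of $x_1,y_1$ avoids $J_3$ must lie on $V(C)$ by Lemma \ref{lemma4.1}$(a)$ applied to $x$ (resp.\ $y$), whose four neighbours are exhausted by $J_3\cup V(C)$. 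If $d_{ext(C)}(c)=2$, one instead shows the fourth vertex $x_1$ of the facial cycle $cdx_1x$ cannot leave $C$: otherwise $d_{int(C)}(c)=d_{int(C)}(x)=0$, and the induced big cycle $C$ would acquire a chord, contradicting $3$-connectivity. Your sketch contains neither the case split on $d_{ext(C)}(c)$ nor either of these two closing arguments, and your final step also applies Lemma \ref{lemma4.1}$(a)$ to $c_\pm$ where it should be applied to $d$ (once $d$ sits on the facial $\beta_t$-cycle, its two non-$J_3$ neighbours are $c$ and $x_1$, which is what pins down $e=x_1$). As written, the proof has a genuine gap at its central claim.
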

 
 \begin{proof}
Let $C$ be a big $\beta_t$-cycle and suppose that $cde$ is a $\beta_t$-path, $c \in V(C)$ and $d \notin V(C)$. By Remark~\ref{remark4.1}, we can assume that $d \in ext(C)$. Since $d \notin V(C) \cup J_3$, by Lemma \ref{lemma4.1}$(a)$, $c \in J_2$. Hence, $d \in J_{1}$ and $e \in J_2$. Let $xcy$ be the path contained in $C$. Then, $x, y \in J_1$.  Notice that, by condition $(a_1)$, $d_{ext(C)}(c) \leqslant  2$. Assume first that $d_{ext(C)}(c) = 1$. Since vertices $d$,  $x \in J_1$ ($d$, $y \in J_{1}$) are consecutive neighbours of $c$, there exists a facial $4$-cycle $C_{1} = cdx_{1}xc$ ($C_{2} = cdy_{1}yc$, respectively). Notice that, $x_{1} \notin J_3$ or $y_{1} \notin J_3$. Otherwise,  $d$ has two non-consecutive neighbours in $J_3$ and, by condition $(6)$ of Definition \ref{definition4.3}, $t(d) = \gamma$, contrary to $t(d) = \beta$.  If  $x_{1} \notin J_3$, then $x_{1}\in V(C)$ because, by  Lemma \ref{lemma4.1}$(a)$, $x$ has two neighbours of~$J_3$ and two neighbours of $V(C)$. Hence, $C_{1}$ is a $\beta_t$-facial cycle. Thus $e = x_{1}$ because, by  Lemma \ref{lemma4.1}$(a)$, $d$ has two neighbours of $J_3$ and two neighbours of $V(C_1)$. Similarly, if $y_{1} \notin J_3$, then $y_{1} \in V(C)$. Hence, $C_2$ is a $\beta_t$-facial cycle and $e = y_{1}$.

Suppose now that $d_{ext(C)}(c) = 2$. We can assume that vertices $d$, $x \in J_1$ are consecutive neighbours of $c$. Then, there exists a facial $4$-cycle $C_{1} = cdx_{1}xc$. Notice that if $x_{1} \notin V(C)$, then $x_{1} \in ext(C)$. Hence, $d_{ext(C)}(x) = 2$ because $x \in J_1$. Since $d_{int(C)}(c) = d_{int(C)}(x) = 0$, the cycle $C$ has a chord which is a~contradiction because $C$  is an induced cycle in $J$. Thus,  $x_1 \in V(C)$ and $C_{1}$ is a  $\beta_t$-facial cycle. Hence, $e = x_{1}$ because, by  Lemma \ref{lemma4.1}$(a)$, $d$ has two neighbours of $J_3$ and two neighbours of $V(C_1)$. 
\end{proof}

 \begin{corollary}\label{corollary4.1} If $C$ is a big $\beta_t$-cycle and $P$ is a $\beta_t$-path which has a common vertex with $C$, then each vertex of $ P$ belonging to $J_2$ is a vertex of $C$.
  \end{corollary}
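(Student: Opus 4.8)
The plan is to argue by contradiction: suppose some vertex $v \in V(P) \cap J_2$ satisfies $v \notin V(C)$, and derive a contradiction by examining how the $\beta_t$-path $P$ enters and leaves the big $\beta_t$-cycle $C$. Throughout, recall that every vertex of $P$ is coloured $\beta$, hence lies in $J_1 \cup J_2$ and avoids $J_3$ (by condition $(5)$ of Definition \ref{definition4.3}), and that $C$ has length at least six.

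First I would analyse the maximal sub-paths of $P$ lying entirely outside $C$ (call them \emph{gaps}). Since $P$ has a common vertex with $C$, no gap is all of $P$, so each gap is flanked in $P$ on at least one side by a vertex of $V(C)$. I claim every gap is a single vertex. Indeed, if a gap $q_1 \ldots q_r$ had $r \geqslant 2$ and its predecessor $c$ in $P$ lay on $C$, then $c\,q_1\,q_2$ would be a $\beta_t$-path with $c \in V(C)$ and $q_1 \notin V(C)$, so Lemma \ref{lemma4.2} would force $q_2 \in V(C)$, contradicting $q_2$ belonging to the gap; the symmetric argument, applied to $c'\,q_r\,q_{r-1}$, handles the case where the gap begins $P$ and is bounded on the right by a vertex $c' \in V(C)$. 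Hence every vertex of $P$ outside $C$ is a singleton gap, and in particular its $P$-neighbours (one or two of them) all lie on $V(C)$.

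Next I would show that each off-$C$ vertex of $P$ in fact lies in $J_1$, which contradicts $v \in J_2$ and finishes the proof. When the off-$C$ vertex $v$ is an interior vertex of $P$, it has two $P$-neighbours $c, e \in V(C)$; applying Lemma \ref{lemma4.2} to the $\beta_t$-path $c\,v\,e$ yields $c \in J_2$, and since $v$ is adjacent to $c$, is coloured $\beta$ (so $v \notin J_3$) and $J_2$ is independent, we conclude $v \in J_1$.

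The main obstacle is the endpoint case: if $v$ is an end of $P$, it has a single $P$-neighbour $c \in V(C)$, so no three-vertex $\beta_t$-path passes through $v$ and Lemma \ref{lemma4.2} cannot be invoked directly. Here I would reuse the mechanism from the proof of Lemma \ref{lemma4.2}: the vertex $c$ lies on the big $\beta_t$-cycle $C$ and has degree at most $4$ by $(a_1)$, while its two neighbours on $C$ together with $v$ are all coloured $\beta$ and hence lie outside $J_3$; thus $c$ has at most one neighbour in $J_3$, so by Lemma \ref{lemma4.1}$(a)$ it cannot belong to $J_1$. Therefore $c \in J_2$, and as before $v \in J_1$, contradicting $v \in J_2$. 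This completes the contradiction in every case, so each vertex of $P$ belonging to $J_2$ must lie on $C$.
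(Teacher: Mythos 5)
Your proof is correct and takes the route the paper intends: Corollary \ref{corollary4.1} is stated without proof as a direct consequence of Lemma \ref{lemma4.2}, and you obtain it by iterating that lemma along $P$ to show that every vertex of $P$ off $C$ is an isolated excursion whose $P$-neighbours lie on $C$ and whose attachment vertex is in $J_2$, which forces the excursion vertex into $J_1$. The only place you go beyond what the paper leaves implicit is the end-vertex case, where Lemma \ref{lemma4.2} cannot be invoked directly; your substitute argument via condition $(a_1)$ and Lemma \ref{lemma4.1}$(a)$ is precisely the mechanism from the opening lines of the proof of Lemma \ref{lemma4.2}, and this case is genuinely needed (e.g.\ in Case $(c)$ of the proof of Lemma \ref{lemma4.3}, the $J_2$-vertex $h$ is an end of the path to which the corollary is applied).
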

  
\begin{definition}\label{definition4.5} Let $t$ be a colouring of $V(J)$ associated with ${\cal J} = \{J_{1}, J_{2}, J_{3}\}$.  An  $\alpha_t$-path with end vertices belonging to $J_3$ is called \textit{$t$-bad} in $J$ if each of its end-vertices is adjacent to a vertex of $J_1$ belonging to a~$\beta_t$-cycle. Every vertex of a bad path belonging to~$J_1$ is called \textit{$t$-bad}. The colouring  $t$ is called a \textit{good colouring} if $J_1$ contains at most one $t$-bad vertex.  
\end{definition}

\begin{remark} \label{remark4.3} From conditions $(4)$, $(5)$ and $(7)$ of Definition \ref{definition4.3}, we see that vertices of a bad path belong to $J_1 \cup J_3$. Moreover, by conditions $(2)$ and $(6)$ of Definition \ref{definition4.3},  each bad vertex has two consecutive neighbours of $J_3$ and two neighbours of $J_2$. 
\end{remark}

\begin{lemma}\label{lemma4.3}  Let $t$ be a colouring of $V(J)$ associated with ${\cal J}$ such that $J_1$ has no $t$-bad vertex. If there exists a non-independent $\beta_t$-cycle, then there exists a colouring $m$ of $V(J)$ associated with ${\cal J}$ such that~$J_1$ has no $m$-bad vertex and $B_m \subset B_t$.
\end{lemma}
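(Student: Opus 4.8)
The plan is to pin down the local shape of the non-independence, make an essentially forced two-vertex recolouring that kills a big $\beta_t$-cycle, and then restore the two global conditions (that the $\alpha_m$-vertices induce a forest and that no $m$-bad vertex exists) using at most one further colour change.

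First I would reduce to a canonical picture. By Lemma~\ref{lemma4.1}(c)--(d) every $\beta_t$-cycle of length $4$ is facial and any two of them are disjoint, so a non-independent $\beta_t$-cycle forces a big $\beta_t$-cycle $C$ that meets a second $\beta_t$-cycle. Applying Lemma~\ref{lemma4.2} and Corollary~\ref{corollary4.1} to $C$, every $\beta_t$-path that leaves $C$ returns at once, so the non-independence is realised by a facial $\beta_t$-$4$-cycle ``ear'' $C'=cdex$ in which $c,e\in J_2$ and $x\in J_1$ lie on $C$ (with $e,x,c$ consecutive on $C$) while $d\in J_1$ lies off $C$. By Lemma~\ref{lemma4.1}(a) both $x$ and $d$ have exactly two consecutive neighbours in $J_3$ and two $\beta_t$-neighbours, namely $c$ and $e$; and since $c,e$ already have the three $J_1$-neighbours $x,d$ and their remaining neighbour on $C$, Lemma~\ref{lemma4.1}(b) forces each of $c,e$ to have its fourth neighbour in $X_1$.

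I then obtain $m$ from $t$ by changing the colour of the two $J_1$-vertices $x$ and $d$ from $\beta$ to $\alpha$, leaving all else unchanged. After this each of $c,e$ retains exactly one $\beta_m$-neighbour, hence lies on no $\beta_m$-cycle; in particular $C$ and $C'$ are destroyed. As no vertex becomes $\beta$, every $\beta_m$-cycle is a $\beta_t$-cycle, so $B_m\subseteq B_t$, and since $C\in B_t\setminus B_m$ the inclusion is proper. The routine conditions of Definition~\ref{definition4.3} are immediate: conditions $(5)$ and $(6)$ hold because no vertex of $J_3$ and no $\gamma$-vertex is touched; condition $(2)$ holds because $x,d$ keep their two $J_3$-neighbours; condition $(7)$ holds because the only $\alpha_m$-neighbours of $x$ and of $d$ lie in $J_3$ (note $x\not\sim d$, being the opposite corners $\{d,x\}$ of the facial cycle $C'$); condition $(3)$ for the new $\alpha_m$-vertices $x,d$ is exactly the statement that their $\beta_m$-neighbours $c,e$ lie on no $\beta_m$-cycle, which we verified; and condition $(1)$ holds since $B_m\subseteq B_t\subseteq B_p$. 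This is also where I would record why a single change cannot work: turning any one $J_1$-vertex of $C$ to $\alpha$ leaves condition $(3)$ violated, because its $\beta$-neighbours stay on the $\beta_t$-cycle obtained by rerouting $C$ through the ear, while turning a $J_2$-vertex of $C$ to $\alpha$ violates condition $(4)$ (two $J_3$-neighbours) or, because of the $X_1$-neighbour just found, condition $(7)$.

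The two conditions that remain are $(8)$ and the absence of an $m$-bad vertex, and I expect their interaction to be the crux. Promoting $x$ and $d$ to $\alpha$ merges $\alpha$-trees through the two pairs of $J_3$-neighbours; planarity and $3$-connectivity (Remark~\ref{remark4.1}, Remark~\ref{remark4.2}) should show these four $J_3$-vertices are distinct, and if they lie in distinct $\alpha_t$-trees no cycle is created and condition~$(8)$ is preserved. The genuine difficulty is the case where the promotion closes an $\alpha$-cycle: here a cyclomatic count shows that one must first argue, again from planarity and $3$-connectivity, that at most one $\alpha$-cycle appears, and then spend the third permitted colour change by recolouring a single vertex of $J_1\cup J_2$ on that cycle back to $\beta$; the delicate point is to choose it to be a $\beta_p$-vertex lying on no $\beta_m$-cycle, so that conditions $(1)$ and $(8)$ (and $B_m\subseteq B_t$) are restored at once. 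Throughout I must verify that neither the two promotions nor the repair creates an $m$-bad vertex in $J_1$. Since $c$ and $e$ are now off all $\beta_m$-cycles, the endpoint-adjacency in the definition of a bad path (Definition~\ref{definition4.5}) can be reactivated only through \emph{other} $\beta_m$-cycles, and excluding this, while using the hypothesis that $J_1$ has no $t$-bad vertex, is the heart of the argument; I would organise it as a case analysis on the types and degrees of the neighbours of $x,d,c,e$, in the spirit of Lemmas~\ref{lemma4.1} and~\ref{lemma4.2}.
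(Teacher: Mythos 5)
Your local analysis agrees with the paper's: via Lemmas~\ref{lemma4.1} and~\ref{lemma4.2} the non-independence is reduced to a big $\beta_t$-cycle $C$ meeting a facial $\beta_t$-$4$-cycle $cdex$ with $e,x,c$ consecutive on $C$, $x,d\in J_1$, and the fourth neighbours of $c,e$ forced into $X_1$. But your recolouring diverges from the paper's, and that choice is what leaves the two hardest conclusions unproved. The paper does not promote the $J_1$-corners of the ear; it swaps the shared $J_2$-vertex $c$ to $\alpha$ and its $X_1$-neighbour $q$ to $\beta$ (in one subcase also a second $J_2$-vertex of $C$). After that swap every neighbour of $c$ is a $\beta_m$-vertex, so $c$ is isolated in the $\alpha_m$-subgraph and conditions $(7)$--$(8)$ of Definition~\ref{definition4.3} are automatic; the only work is showing $q$ lies on no $\beta_m$-cycle. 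Your move instead attaches $x$ and $d$ to the $\alpha$-forest through their four $J_3$-neighbours, and the possible creation of an $\alpha$-cycle --- which you yourself name as the crux --- is left unresolved. The repair you sketch (a cyclomatic count plus a third recolouring of some vertex of the new $\alpha$-cycle back to $\beta$) is not justified and is the wrong tool: any $J_1\cup J_2$-vertex of such a cycle other than $x,d$ is an $\alpha_t$-vertex of $J_1$ with two consecutive $J_3$-neighbours, and demoting it to $\beta$ threatens conditions $(1)$ and $(3)$ and the bad-vertex count in ways you do not control. What actually closes the case is the hypothesis you never invoke here: an $\alpha_t$-path joining two of the $J_3$-neighbours of $x$ and $d$ would be a $t$-bad path, and since $J_3$ is independent and (by condition $(4)$) $\alpha_t$-vertices of $J_2$ have at most one $J_3$-neighbour, such a path of length at least two contains a $t$-bad vertex of $J_1$, contradicting the assumption; the degenerate escape ($x$ and $d$ sharing a $J_3$-neighbour) is impossible because the $J_3$-neighbours of $x$ lie on the opposite side of $C$ from the face $cdex$ containing $d$.

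The second gap is that the conclusion that $J_1$ has no $m$-bad vertex is wholly deferred (``the heart of the argument\dots I would organise it as a case analysis''). The lemma asserts exactly three things --- $m$ is associated with $\cal J$, $J_1$ has no $m$-bad vertex, and $B_m\subset B_t$ --- and as written you have fully established only the third, together with conditions $(1)$--$(7)$ for $m$ and the correct observation that a single recolouring fails because $C$ can be rerouted through the ear. So the proposal is a plausible alternative plan with a genuinely different (and riskier) recolouring, but not yet a proof.
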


\begin{proof} 
Let $C$ be a non-independent $\beta_t$-cycle. Since,  by Lemma \ref{lemma4.1}$(d)$, every two $\beta_t$-cycles of order $4$ are disjoint, we may assume that $C$ is a big cycle. From Lemma \ref{lemma4.2}, it follows that there exists a facial $\beta_t$-cycle $D$ and a vertex $c \in J_2$ which is a common vertex of $V(C)$ and $V(D)$.  Since $c \in J_2$, there exists a path $abc$ contained in $C$ such that $a, b \notin V(D)$.  By Lemma~\ref{lemma4.1}$(a)$, $a$, $c\in J_2$ are consecutive neighbours of $b \in J_1$. Hence,  there exists a facial $4$-cycle $abcqa$. Moreover, by condition $(6)$ of Definition \ref{definition4.3}, by Lemma \ref{lemma4.1}$(d)$ and by Lemma \ref{lemma4.1}$(b)$, $q$  is an $\alpha_t$-vertex belonging to $X_{1}$. In view of Remark \ref{remark4.1} we can assume that $q \in int(C)$. Certainly we can assume that $D = cdefc$, where $b$, $d$, $f$, $q$ are consecutive neighbours of~$c$. Since $q$,  $f \in J_1$ are consecutive neighbours of~$c$, there exists a facial $4$-cycle $qcfgq$. Notice that, by Lemma \ref{lemma4.1}$(a)$, $g \in J_3$. 

If $q$ is a vertex of degree $4$, we can assume that $a$, $c$,  $g$,  $h$ are consecutive neighbours of $q$. Then $h \in J_2$, because $q \in X_1$ and $g \in J_3$. Moreover, by condition $(7)$ of Definition \ref{definition4.3},   $h$ is not an $\alpha_t$-vertex. Since $a$, $h \in J_2$ are consecutive neighbours of $q$, there exists a facial $4$-cycle $qhjaq$,  where $j$ is a $\beta_t$-vertex or it is an $\alpha_t$-vertex belonging to $J_1 \cup J_3$. Hence, it suffices to consider the following cases:
\begin{enumerate}
\item[($a$)] $q$ is a vertex of degree $3$ or $h$ is a $\gamma_t$-vertex ($q$ is a vertex of degree $4$),
\item[($b$)] $h$ and $j$ are $\beta_t$-vertices ($q$ is a vertex of degree $4$),
\item[($c$)] $h$ is a $\beta_t$-vertex and $j$ is an $\alpha_t$-vertex ($q$ is a vertex of degree $4$). 
\end{enumerate}
Case $(a)$. We put $m(c) = \alpha$, $m(q) = \beta$ and $m(v) = t(v)$ for $v \notin \{c, q\}$.  Notice that $q$ has only one $\beta_m$-neighbour (the vertex $a$). Thus, $B_m \subset B_t$.  Therefore $J_1$ has no $m$-bad vertex.  Since all neighbours of~$c$ are $\beta_m$-vertices, conditions $(7)$--$(8)$ of Definition \ref{definition4.3} (for $m$) are satisfied. Hence, $m$ is a colouring of $V(J)$ associated with ${\cal J}$.

Case $(b)$. Hence, $hjabc$ is a path contained in $C$ and, by Lemma \ref{lemma4.1}$(a)$,  both $j$ and $b$ have two neighbours belonging to $ext(C) \cap J_3$. Suppose that $a$ has a neighbour (say $p$) belonging to $ext(C)$. If $p$ is an $\alpha_t$-vertex, then, by conditions $(2)$ and $(6)$ of Definition \ref{definition4.3}, $p \in J_{3}$.  We put $m(a) =m(c) = \alpha$, $m(q) = \beta$ and $m(v) = t(v)$ for $v \notin \{a, c, q\}$.   Notice that $q$ has  only one $\beta_m$-neighbour (the vertex $h$). Thus, $B_m \subset B_t$. Therefore~$J_1$ has no $m$-bad vertex. Since $a$ has at most one $\alpha_m$-neighbour (the vertex $p \in J_3$),  conditions~$(4)$ and $(7)$--$(8)$ of Definition \ref{definition4.3} (for $m$) are satisfied. Hence, $m$ is a colouring of $V(J)$ associated with ${\cal J}$. 

Case $(c)$. We put $m(c) = \alpha$, $m(q) = \beta$ and $m(v) = t(v)$ for $v\notin \{c, q\}$. Notice that conditions $(2)$--$(8)$ of Definition \ref{definition4.3} (for~$m$) are satisfied. Assume that $q$ belongs to a $\beta_m$-cycle. Then, there exists a $\beta_t$-path from $h$ to a vertex of $V(C)$. Since $h \in J_2$, by Corollary~\ref{corollary4.1}, $h \in V(C)$.  Since $q, j \notin V(C)$, there exists a vertex $k \in V(C) \cap J_1$ such that  $hqgkh$ is a facial $4$-cycle. Notice that $d_{int(C)}(k) = 2$ because $k$ is adjacent to $g \in int(C)$ and, by Lemma \ref{lemma4.1}$(a)$, it has two consecutive neighbours of $J_3$. Further, $d_{int(C)}(h) = 2$ because $h$ is adjacent to vertices $j$, $q \in int(C)$.  Since $d_{ext(C)}(k) = d_{ext(C)}(h) = 0$, $C$ has a chord which is a contradiction because $C$ is an induced cycle in $J$. Hence, $q$ doesn't belong to any $\beta_m$-cycle. Therefore, $B_m \subset B_t$. Certainly, $J_1$ has no $m$-bad vertex.
\end{proof}

\begin{lemma}\label{lemma4.4} 
Let $t$ be a good colouring of $V(J)$ and suppose that $C$ is an independent $\beta_t$-cycle.  If  there exists a $t$-bad path (say $bad$) we choose a vertex $w \in V(C)\cap J_1$ such that $abwda$ is not a facial cycle.  If~$J_1$ has no $t$-bad vertex, then $w$ is any vertex of $V(C)\cap J_1$. We recolour the vertex $w$, swapping $\beta$ with~$\alpha$. Then, we obtain a new colouring $m$ of $V(J)$ satisfying the following conditions:
\begin{enumerate}
\item[($a$)] $m$ is a colouring associated with ${\cal J}$, 
\item[($b$)] if $a$ is a $t$-bad vertex, than any vertex different from $a$ and $w$ is not an  $m$-bad vertex,
\item[($c$)] if  $J_1$ has no $t$-bad vertex, then  any vertex different from $w$ is not an  $m$-bad vertex.
\end{enumerate}
\end{lemma}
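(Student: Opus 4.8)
The plan is to obtain $m$ from $t$ by the single recolouring $\beta\to\alpha$ at $w$ and to verify the eight conditions of Definition \ref{definition4.3} one at a time, the point being that such a recolouring can only shrink the $\beta$-class. First I would record the local shape of $w$: since $w\in V(C)\cap J_1$ lies on a $\beta_t$-cycle, Lemma \ref{lemma4.1}$(a)$ gives that $w$ has two consecutive neighbours in $J_3$ (say $p$, $p'$), and then condition $(a_1)$ forces $\deg(w)=4$, so the remaining two neighbours of $w$ are its neighbours on $C$, both $\beta$-vertices. Because only $w$ leaves the $\beta$-class, every $\beta_m$-vertex is a $\beta_t$-vertex, hence $B_m\subseteq B_t$; and as $C$ is independent, $w$ lies on no other $\beta_t$-cycle, so in fact $B_m=B_t\setminus\{C\}\subsetneq B_t$. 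This immediately yields condition $(1)$ for $m$ and makes condition $(3)$ only easier to satisfy.

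Next I would dispatch the remaining \emph{local} conditions. Conditions $(2)$, $(4)$, $(5)$, $(6)$ are unchanged for every vertex except possibly $w$, and at $w$ they hold by the description above: $w$ is a degree-$4$ vertex of $J_1$ with exactly two, \emph{consecutive}, neighbours in $J_3$, so it obeys $(2)$ and is not a $\gamma$-candidate in $(6)$, while $(5)$ concerns only $J_3$. For $(3)$ at $w$: its $\beta_m$-neighbours are precisely its two neighbours on $C$, and by independence of $C$ these lie on no $\beta_m$-cycle. For $(7)$: the only neighbours of $w$ in $J_1\cup J_2$ are its two $C$-neighbours, which remain $\beta$, so the set of $\alpha_m$-vertices of $J_1\cup J_2$ stays independent.

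The crux is condition $(8)$, the forest property. Adding $w$ to the $\alpha$-set attaches it, in the $\alpha_m$-graph, only to its two $J_3$-neighbours $p$, $p'$ (its sole $\alpha_m$-neighbours), so a cycle is created exactly when $p$ and $p'$ already lie in one tree of the $\alpha_t$-forest. Using Whitney's theorem (Remark \ref{remark4.1}) I would place $p$, $p'\in int(C)$; any $\alpha_t$-path $Q$ joining them avoids $C$ entirely, since every vertex of $C$ is a $\beta_t$-vertex except $w$, whose only $\alpha$-edges run to $p$, $p'$, so $Q\subseteq int(C)$. I then examine the face of $J$ at $w$ between the edges $wp$ and $wp'$: it is a $4$-cycle $wpvp'$ (not a triangle, as $p$, $p'\in J_3$ are non-adjacent) with $v\in int(C)$. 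The main work is to show, by an innermost-region argument on $Q\cup\{pw,wp'\}$ together with the induced-cycle/chord dichotomy of Remark \ref{remark4.2}, that such a $Q$ forces its connector to be a single $\alpha_t$-vertex $v\in J_1$ whose two consecutive $J_3$-neighbours are $p$, $p'$. But then $p$, $p'$ are each adjacent to $w\in V(C)\cap J_1$, a vertex of the $\beta_t$-cycle $C$, so $pvp'$ is a $t$-bad path and $v$ a $t$-bad vertex. In case $(c)$ this contradicts the hypothesis that $J_1$ has no $t$-bad vertex, so $(8)$ holds for every choice of $w$. In case $(b)$, $a$ is the unique $t$-bad vertex; the connector $v$ would be a second one unless $v=a$, i.e. $\{p,p'\}=\{b,d\}$, in which case the face $wpvp'$ is exactly $abwda$ --- excluded by the choice of $w$. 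Either way $(8)$ holds, completing $(a)$.

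Finally, for $(b)$ and $(c)$ I would track bad vertices. Every $\beta_m$-cycle is a $\beta_t$-cycle other than $C$, and the $\alpha$-forest was altered only by attaching $w$ at $p$, $p'$; hence any $m$-bad path is either already a $t$-bad path or is one newly created at $p$, $p'$, whose only new $J_1$-vertex is $w$. Thus every $m$-bad vertex is either a $t$-bad vertex (so $a$ in case $(b)$, none in case $(c)$) or $w$ itself, which is exactly the conclusion of $(b)$ and $(c)$. The genuinely delicate step is the planar reduction inside condition $(8)$: turning the existence of an $\alpha_t$-path between the consecutive neighbours $p$, $p'$ into the single apex $v$ of the face at $w$, and then recognising $v$ as a forbidden second bad vertex (or the excluded face $abwda$). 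Everything else is bookkeeping that propagates the properties of $t$ through a recolouring that only removes one vertex from the $\beta$-class.
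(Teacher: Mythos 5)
Your overall strategy coincides with the paper's: recolour $w$ from $\beta$ to $\alpha$, note that the $\beta$-class only shrinks so that conditions $(1)$--$(7)$ of Definition~\ref{definition4.3} and the $\beta$-forest property are routine, reduce condition $(8)$ to the non-existence of an $\alpha_t$-path $Q$ joining the two consecutive $J_3$-neighbours $p,p'$ of $w$, and prove $(b)$, $(c)$ by splitting any $m$-bad path through $w$ into two $t$-bad pieces. All of that bookkeeping is correct and matches the paper.

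The gap is in your treatment of condition $(8)$. You defer ``the main work'' to an unspecified innermost-region/planarity argument that is supposed to show that any such $Q$ collapses to a single connector $v$ sitting on the face of $J$ at $w$ between $wp$ and $wp'$, and only \emph{afterwards} do you invoke the goodness of $t$. Planarity alone cannot deliver this reduction: nothing topological prevents $Q$ from being a long alternating $J_3$--$J_1$ $\alpha_t$-path inside $int(C)$. The correct order is the reverse, and it is what the paper does: \emph{whatever its length}, $Q$ is a $t$-bad path, since it is an $\alpha_t$-path whose ends $p,p'\in J_3$ are both adjacent to $w$, a $J_1$-vertex of the $\beta_t$-cycle $C$; hence every internal $J_1$-vertex of $Q$ is $t$-bad, and goodness of $t$ allows at most one such vertex. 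This disposes of case $(c)$ outright (any such $Q$ has an internal $J_1$-vertex, as $p,p'\in J_3$ are non-adjacent) and in case $(b)$ forces $Q=bad$, so the putative $\alpha_m$-cycle is $abwda$. Only here is planarity needed: since $b,d$ are consecutive neighbours of both $a$ (Remark~\ref{remark4.3}) and $w$ (Lemma~\ref{lemma4.1}$(a)$), Remark~\ref{remark4.2} forces the induced $4$-cycle $abwda$ to be facial, contradicting the choice of $w$. Your sketch instead \emph{asserts} that $wpvp'$ ``is the face at $w$'', conflating the $4$-cycle $wbadw$ with the face between $wb$ and $wd$ --- that identification is precisely what the Remark~\ref{remark4.2} argument must supply. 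So all the ingredients appear in your proposal, but the pivotal reduction of $Q$ to a single bad vertex is left unproven, and the route you propose for it (planarity first, goodness second) would not close it.
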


\begin{proof} Assume that $bad$  is a $t$-bad path. Since, by Remark \ref{remark4.3}, $b$, $d$ are consecutive neighbours of $a$, there exists a facial cycle $abcda$. Let $C$ be an independent $\beta_t$-cycle and suppose that $w$ is a vertex of $V(C)\cap J_1$, $w \neq c$. 

We first prove $(a)$ (the same proof goes when $J_1$ has no $t$-bad vertices). Notice that each $\alpha_m$-vertex ($\beta_m$-vertex) different from $w$ is  an $\alpha_t$-vertex ($\beta_t$-vertex, respectively).  From this it follows that every $\beta_m$-cycle is a $\beta_t$-cycle and each $\beta_m$-neighbour of $w$ does not belong to any $\beta_m$-cycle because $C$ is an independent $\beta_t$-cycle.  It is easy to check that the colouring $m$ satisfies all conditions $(1)$--$(7)$ of Definition~\ref{definition4.3}. Further, every $\alpha_m$-cycle omitting the vertex $w$ is an $\alpha_t$-cycle. Therefore, it suffices to prove that $w$ does not belong to any $\alpha_m$-cycle.

Suppose, by contradiction, that there exists an $\alpha_m$-cycle (say $D$) containing the vertex $w$. Hence $D - w$ is an $\alpha_t$-path with end-vertices belonging to $J_3$. Then, $D - w$ is a $t$-bad path, because its end-vertices are adjacent to the vertex $w \in V(C)$.  Since $t$ is a good colouring, $D - w = bad$. Since $w \in J_1$ is a vertex of the cycle $C$ and $b, d \in J_3$ are neighbours of $w$, then by Lemma \ref{lemma4.1}$(a)$, $b, d$ are consecutive neighbours both of $w$, $a$ and $c$, which by Remark \ref{remark4.1}$(a)$, is a contradiction (consider cycles $abcda$ and $abwda$). 

We now prove $(b)$ and $(c)$. It suffices to prove that if a vertex different from $w$  is $m$-bad, then it is $t$-bad. Let $Q$ be an $m$-bad path.  If $w \notin V(Q)$, $Q$ is a $t$-bad path because  each $\alpha_m$-vertex different from $w$ is  an $\alpha_t$-vertex and every $\beta_m$-cycle is a $\beta_t$-cycle. If $w \in V(Q)$, then $Q - w$ is the union of two $\alpha_t$-paths (say $Q_1$ and $Q_2$).  Hence, $Q_1$ and $Q_2$ are $t$-bad paths because $w \in V(C)$ is an end-vertex both of $Q_1$ and $Q_2$.
 \end{proof}
 
 \begin{lemma}\label{lemma4.5} Let $t$ be a colouring of $V(J)$ associated with ${\cal J}$ and suppose that each $\beta_t$-cycle is independent. Let $a \in J_1$ be an $\alpha_t$-vertex which has two consecutive neighbours of $J_3$ (say $b$ and $d$) and two neighbours of $J_2$. If $b$ is of degree at most $4$, then there exists at most one $\beta_t$-cycle (say $C$) which contains a neighbour of $b$ belonging to $J_1$. Moreover, $C$ contains at most two neighbours of $b$ belonging to $J_1$. Further, if $C$ is a $4$-cycle, then it contains only one neighbour of $b$ belonging to $J_1$.
\end{lemma}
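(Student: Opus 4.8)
The plan is to reduce the whole statement to the local structure of $J$ around $b$ and then to exploit $3$-connectivity through Remark \ref{remark4.1} and Remark \ref{remark4.2}. First I would record the basic facts about the candidate neighbours. Since $b\in J_3$ is a neighbour of $a$, condition $(5)$ of Definition \ref{definition4.3} makes $b$ an $\alpha_t$-vertex, so all its neighbours lie in $J_1\cup J_2$; in particular $a$ itself is an $\alpha_t$-vertex and hence lies on no $\beta_t$-cycle. Thus among the at most four neighbours of $b$, at most three can be $J_1$-vertices situated on a $\beta_t$-cycle. For any such \emph{relevant} neighbour $x$, Lemma \ref{lemma4.1}$(a)$ forces $x$ to have degree $4$, to have exactly two consecutive neighbours in $J_3$ one of which is $b$ (write $b_x$ for the other), and to have its remaining two neighbours lie in $J_2$ and be consecutive around $x$; these two $J_2$-vertices are then exactly the two neighbours of $x$ on its $\beta_t$-cycle, since the two $J_3$-neighbours are $\alpha_t$ and cannot lie on any $\beta_t$-cycle.

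Next I would analyse the faces at $b$. Two relevant neighbours $x,y$ that are consecutive in the rotation of $b$ cannot bound a triangular face, as two $J_1$-vertices are non-adjacent, so the face between them is a quadrilateral $bxzy$. I would then split on the type of $z$. If $z\in J_2$, then $z$ is a common $J_2$-neighbour of $x$ and $y$, hence a common neighbour of $x$ and $y$ on a $\beta_t$-cycle; by the independence hypothesis this forces $x$ and $y$ onto the \emph{same} cycle, with $z$ lying between them on it. If $z\in J_3$, then $z=b_x=b_y$ and $bxzy$ is a facial $4$-cycle. This dichotomy is the engine for all three assertions.

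For the first assertion I would argue by contradiction: suppose relevant neighbours $x\in C_1$ and $y\in C_2$ with $C_1\neq C_2$, which by independence are disjoint. Using Remark \ref{remark4.1} I may place $b$ in the exterior of each cycle, and then run the face dichotomy along the neighbours of $b$ lying between $x$ and $y$ in the rotation. Each time the opposite vertex of a forced quadrilateral lies in $J_2$, the two flanking relevant neighbours collapse onto one cycle, while the $J_3$-case produces an induced $4$-cycle $bxzy$; feeding these into the $3$-connectivity constraints of Remark \ref{remark4.2}$(a)$--$(b)$ (an induced cycle cannot have empty interior on both sides, nor can the implied configuration avoid a forbidden chord) shows that $C_1$ and $C_2$ would be forced to meet, contradicting their disjointness. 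Hence all relevant neighbours lie on a single cycle $C$. For the second assertion I would use the degree bound directly: since $a$ occupies one of the at most four slots at $b$, at most three neighbours of $b$ can lie on $C$, and I would rule out three. A third relevant neighbour, together with the quadrilaterals forced by the dichotomy, would make the single simple cycle $C$ pass a third time through the neighbourhood of the degree-$4$ vertex $b$, forcing two of its vertices to be joined off $C$ or to coincide, which is impossible. Finally, if $C$ has length $4$ then by Lemma \ref{lemma4.1}$(c)$ it is facial and its two $J_1$-vertices $p,r$ are opposite; were $b$ adjacent to both, then placing $b$ in the exterior by Remark \ref{remark4.1} (the interior being the empty face) the pair $\{p,r\}$ would separate the two $J_2$-corners of $C$ from $b$, contradicting $3$-connectivity through Remark \ref{remark4.2}. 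Hence $b$ is adjacent to at most one $J_1$-vertex of $C$.

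I expect the main obstacle to be the first assertion: tracking, via the face dichotomy and the side of each induced cycle on which $b$ lies, exactly how two disjoint $\beta_t$-cycles near $b$ are forced to intersect. The degree-$4$ restriction on $b$, and on its $J_1$-neighbours through condition $(a_1)$, is precisely what keeps this bookkeeping finite, while Remark \ref{remark4.2} is the tool that converts each ``empty interior'' or ``chord'' configuration into the required contradiction.
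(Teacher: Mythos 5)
Your local set-up is sound (the degree-$4$ structure of a relevant neighbour forced by Lemma \ref{lemma4.1}$(a)$, and the dichotomy for the $4$-face between two consecutive relevant neighbours of $b$), but all three assertions are then settled by appeals to planarity and $3$-connectivity that do not close, and you never invoke condition $(3)$ of Definition \ref{definition4.3}, which is the actual engine of the proof. Write the rotation at $b$ as $a,c,c_1,c_2$, so that $c$ is the fourth vertex of the facial cycle $abcda$. The decisive step is to exclude $c_2$ (the neighbour of $b$ consecutive with $a$ on the far side): the face between $bc_2$ and $ba$ is a $4$-face $bc_2ea$, and $e$ is forced to be a $J_2$-neighbour of $a$ (otherwise $a$ would have three neighbours in $J_3$, against the hypothesis); if $c_2$ lay on a $\beta_t$-cycle, $e$ would be a $\beta_t$-vertex of that cycle adjacent to $a$, contradicting condition $(3)$, since $a$ is an $\alpha_t$-vertex of $J_1$ with two neighbours in $J_3$. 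Without this, the configuration you need to exclude --- $c$, $c_1$, $c_2$ all relevant and all on one cycle $C$ that re-enters the neighbourhood of $b$ three times --- is perfectly embeddable: no two vertices are forced to coincide and no forbidden chord appears, so your planarity argument for ``at most two'' fails.

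The ``at most one cycle'' step has the same problem. Once $c_2$ is excluded, the only pair left is $(c,c_1)$, and for this pair the middle vertex $f$ of the face $bcfc_1$ cannot lie in $J_3$ (the rotation at $c$ is $d,b,f,\dots$, so $f\neq d$, and $b,d$ are the only $J_3$-neighbours of $c$); hence $f$ lies on both cycles and independence identifies them. In your version $c_2$ is still in play, and for the pair $(c_1,c_2)$ your own dichotomy allows the middle vertex to lie in $J_3$, in which case nothing forces their cycles to meet; the sentence ``feeding these into Remark \ref{remark4.2} shows $C_1$ and $C_2$ are forced to meet'' is a claim, not an argument. Finally, your $4$-cycle case is wrong as stated: the $J_2$-corners of $C$ may have further neighbours off $C$, so $\{p,r\}$ need not separate them from $b$. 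The paper argues instead that a $\beta_t$-$4$-cycle is facial by Lemma \ref{lemma4.1}$(c)$, and two $J_1$-neighbours of $b$ on it would make it share the length-$2$ path $cfc_1$ with the facial $4$-cycle $bcfc_1$, forcing $f$ to have degree $2$, which is impossible.
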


\begin{proof}
Let $a \in J_1$ be an $\alpha_t$-vertex which has two consecutive neighbours belonging to $J_3$ (say $b$ and~$d$) and two neighbours of $J_2$.  Then there exists a facial cycle $abcda$. Suppose that $a$, $c$, $c_{1}$, $c_{2}$ are consecutive neighbours of $b$ (the same proof goes when $b$ is of degree $3$).  

Assume first that $c_{2}$ is a vertex of $J_{1}$ belonging to a $\beta_t$-cycle (say $C_2$). Since $c_{2}$, $a \in J_{1}$ are consecutive neighbours of $b$, there exists a facial $4$-cycle $c_{2}baec_{2}$. If $e \notin J_3$ then  $e \in V(C_{2})$ because, by Lemma~\ref{lemma4.1}$(a)$,~$c_2$ has  two neighbours of $J_3$ and two neighbours of $V(C_2)$. Hence, $a$ has a neighbour  belonging to $V(C_2)$ which, by condition $(3)$ of Definition \ref{definition4.3}, is a contradiction. If $e \in J_3$, then $a$ has three neighbours belonging to $J_3$ which is a contradiction. It follows that $b$ has at most two neighbours in $J_1$  (different than $c_2$ and $a$) each of which belongs to a $\beta_t$-cycle.  

Let now $c$, $c_{1} \in J_{1}$ and each of them belongs to a $\beta_t$-cycle (say $C$ and $C_1$, respectively).  Since $c$ and~$c_1$ are consecutive neighbours of $b$ there exists a facial $4$-cycle $D = cfc_{1}bc$. Hence,   by Lemma~\ref{lemma4.1}$(a)$, $f \in V(C)$ because $b, d$ are  neighbours of $c$ belonging to $J_3$. Since $f \notin J_3$, $f \in V(C_1)$ because, by Lemma~\ref{lemma4.1}$(a)$, $c_1$ has  two neighbours of $J_3$ and two neighbours of $V(C_1)$. Therefore, $C = C_1$ because they are independent. Remark that $C$ is a big cycle. Otherwise, by Lemma~\ref{lemma4.1}$(c)$, $C$ is a facial cycle. Then, $C$ and $D$ are different facial cycles with a common path $cfc_1$ which is a contradiction.
\end{proof}

\begin{lemma}\label{lemma4.6}
 Let $t$ be a good colouring of $V(J)$ and suppose that each $\beta_t$-cycle in $J$ is independent.  If  $B_{t} \neq \emptyset$, then there exists a good colouring $m$ of $V(J)$ such that $B_{m} \subset B_{t}$.
\end{lemma}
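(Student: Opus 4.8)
The plan is to destroy a single $\beta_t$-cycle by the one-vertex recolouring supplied by Lemma~\ref{lemma4.4}, and then to verify that the result is still good. Observe first that Lemma~\ref{lemma4.4} only swaps one vertex $w$ from $\beta$ to $\alpha$, so the set of $\beta_m$-vertices is contained in the set of $\beta_t$-vertices; hence every $\beta_m$-cycle is a $\beta_t$-cycle and $B_m \subseteq B_t$, while the cycle $C$ through $w$ satisfies $C \in B_t \setminus B_m$. Thus the inclusion $B_m \subset B_t$ comes for free, and the entire difficulty is to produce, from a good $t$, a good colouring $m$ that kills at least one cycle. I would split the argument according to whether $J_1$ contains a $t$-bad vertex (there is at most one, since $t$ is good).

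If $J_1$ has no $t$-bad vertex, pick any $C \in B_t$ (nonempty by hypothesis, and independent by assumption; note $V(C) \cap J_1 \neq \emptyset$ since a $\beta_t$-cycle alternates between $J_1$ and $J_2$) and apply Lemma~\ref{lemma4.4} with an arbitrary $w \in V(C) \cap J_1$. By part~$(a)$ the resulting $m$ is associated with ${\cal J}$, and by part~$(c)$ the only possible $m$-bad vertex is $w$; hence $J_1$ has at most one $m$-bad vertex, $m$ is good, and $B_m \subset B_t$.

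If $J_1$ has a (necessarily unique) $t$-bad vertex $a$, lying on a bad path $bad$, then by Remark~\ref{remark4.3} together with condition $(a_1)$ the vertex $a$ has degree exactly $4$, with two consecutive neighbours $b,d \in J_3$ and its remaining two neighbours in $J_2$. The cyclic order of the neighbours of $a$ is then $J_3,J_3,J_2,J_2$, so condition $(a_2)$ forces one of $b,d$, say $b$, to have degree at most $4$. Lemma~\ref{lemma4.5} now applies to $a$ and $b$: there is at most one $\beta_t$-cycle containing a $J_1$-neighbour of $b$. Since $a$ is bad, $b$ is adjacent to a $J_1$-vertex lying on a $\beta_t$-cycle, so this cycle $C$ exists, is unique, and is independent by hypothesis. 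I would apply Lemma~\ref{lemma4.4} to this $C$, choosing $w$ as prescribed there, obtaining an associated colouring $m$ with $B_m \subset B_t$ whose only possible $m$-bad vertices are $a$ and $w$ by part~$(b)$. It then suffices to show that $a$ is not $m$-bad, for then $m$ has at most one bad vertex and is good.

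The main obstacle is precisely this last step. The idea is that destroying $C$ removes the only $\beta$-cycle responsible for the badness of $a$ at the end $b$: because $B_m \subseteq B_t$ with $C \notin B_m$, and because $C$ was, by Lemma~\ref{lemma4.5}, the unique $\beta_t$-cycle meeting $J_1$ in a neighbour of $b$, after recolouring $b$ has no $J_1$-neighbour on any $\beta_m$-cycle. The delicate point is to check that this kills \emph{every} $m$-bad path through $a$, not merely the path $bad$: since $a$'s only neighbours in $J_3$ are $b$ and $d$, any such path runs through the segment $b\,a\,d$, and one must rule out that recolouring $w$ to $\alpha$ lengthens an $\alpha$-path past $b$ into a new bad path with an endpoint beyond $b$. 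Ruling this out using the uniqueness in Lemma~\ref{lemma4.5} and the $3$-connectivity of $J$ (through Remarks~\ref{remark4.1} and~\ref{remark4.2}) is where the real work lies.
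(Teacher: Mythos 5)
Your overall strategy coincides with the paper's (split on whether a $t$-bad vertex exists, recolour one $J_1$-vertex of a suitable $\beta_t$-cycle via Lemma~\ref{lemma4.4}, and invoke Lemma~\ref{lemma4.5} for uniqueness), and the case with no $t$-bad vertex is complete. But in the case of a $t$-bad vertex $a$ you stop exactly at the decisive step --- showing that $a$ is not $m$-bad --- and declare it to be ``where the real work lies''. That step is the content of the lemma, and as set up your argument cannot close it, because you choose $w$ ``as prescribed'' by Lemma~\ref{lemma4.4}, i.e.\ only so that $abwda$ is not a facial cycle. The paper instead chooses $w\in V(C)\cap J_1$ \emph{not adjacent to} $b$; such a $w$ exists by the counting part of Lemma~\ref{lemma4.5} ($b$ has at most two $J_1$-neighbours on $C$ when $C$ is big, only one when $C$ is a $4$-cycle, while $C$ has at least three, resp.\ two, vertices in $J_1$), and this stronger property automatically implies the hypothesis of Lemma~\ref{lemma4.4}.

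With that choice the missing step is short and needs no $3$-connectivity beyond what is already packaged in Lemmas~\ref{lemma4.4} and~\ref{lemma4.5}: suppose $a$ lies on an $m$-bad path $Q$. Since the ends of $Q$ lie in $J_3$, the vertex $a$ is interior to $Q$ and its two $Q$-neighbours are its only $J_3$-neighbours $b$ and $d$. By Lemma~\ref{lemma4.4}$(b)$ the only $J_1$-vertices that can lie on $Q$ are $a$ and $w$; if $b$ were interior to $Q$, its second $J_1$-neighbour on $Q$ would have to be $w$, contradicting $w\not\sim b$. Hence $b$ is an end of $Q$, so $b$ is adjacent to a $J_1$-vertex on some $\beta_m$-cycle $D$. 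But $D$ is also a $\beta_t$-cycle, so by the uniqueness in Lemma~\ref{lemma4.5} we get $D=C$, while $C\notin B_m$ --- a contradiction. Your intuition that ``after recolouring, $b$ has no $J_1$-neighbour on any $\beta_m$-cycle'' is correct, but without forcing $b$ to be an \emph{end} of $Q$ (which is exactly what the non-adjacency of $w$ and $b$ buys) that observation does not exclude an $m$-bad path whose ends lie beyond $b$ and $d$; this is precisely the scenario you flagged and did not eliminate.
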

\begin{proof} Assume first that there exists a $t$-bad vertex (say $a$). By Remark \ref{remark4.3}, $a$ has two consecutive neighbours belonging to $J_3$ (say $b$ and $d$) and two neighbours of $J_2$.  In view of condition $(a_2)$ we can assume that $b$ is of degree at most $4$. Since $t$ is a good colouring of $V(J)$, $bad $ is a $t$-bad path. Hence, $b$ is adjacent with a vertex of $J_1$ belonging to a $\beta_t$-cycle (say $C$).  If $C$ is a big cycle ($4$-cycle), then by Lemma \ref{lemma4.5}, there are at most two vertices (only one vertex, respectively) belonging to $V(C) \cap J_1$ which are (is, respectively) adjacent to $b$. Hence, $C$ contains a vertex $w \in J_1$ which is not adjacent to $b$. 

We put $m(w) = \alpha$ and $m(v) = t(v)$ for other vertices of $V(J)$. 
From Lemma \ref{lemma4.4}$(a)$, we see that $m$ is a colouring associated with $\cal J$. We will prove that $a$ is not an $m$-bad vertex. Suppose by contradiction, that $a$ belongs to an $m$-bad path (say $Q$). 
From Lemma~\ref{lemma4.4}$(b)$, it follows that any vertex of $J_1$ different from $a$ and $w$ does not belong to $Q$. Hence, $b$ is an end-vertex of $Q$ because it is adjacent to $a$ and it is not adjacent to $w$. Therefore, $b $ is adjacent to a vertex of $J_1$ belonging to a $\beta_m$-cycle (say $D$). Since~$D$ is a $\beta_t$-cycle, by Lemma  \ref{lemma4.5}, $C = D$. This is a contradiction because  $C$ is not a $\beta_m$-cycle. Therefore, $a$ is not an $m$-bad vertex. Hence,~$m$ is a good colouring of $V(J)$ such that $B_{m} \subset  B_{t}$. 

Assume now that $J_1$ has no $t$-bad vertices. Since $B_{t} \neq \emptyset$, there exists an independent $\beta_t$-cycle (say~$C_{0}$). We choose any vertex of $V(C_{0}) \cap J_1$ (say $w$).  We put $m(w) = \alpha$ and $m(v) = t(v)$ for other vertices of~$V(J)$. Hence, Lemma \ref{lemma4.6} follows from Lemma \ref{lemma4.4}$(a)$ and~\ref{lemma4.4}$(c)$. 
\end{proof}

  \section{Final results}

Let $J \in {\Theta}$ and suppose that  ${\cal J} = \{J_{1}, J_{2}, J_{3}\}$ is a partition of $V(J)$ into three independent sets.  Recall, that vertices belonging to~$J_{i}$ are called of type $i$. Now we are ready to prove Theorem \ref{theorem1.2}.

\begin{proof} [\bf Proof of Theorem \ref{theorem1.2}]

Notice that the preliminary colouring $p$ of $V(J)$ has no $p$-bad vertices.  In view of Lemma \ref{lemma4.3} and  Lemma \ref{lemma4.6}, there exists a good colouring $t$ of $V(J)$ such that $B_{t} = \emptyset$.

Suppose that $\{v_1, \ldots, v_n\} \subseteq J_{1} \cup J_{2}$ is a sequence of all $\gamma_t$-vertices of $V(J)$. We define by induction a sequence $\{t_{0}, t_{1}, \ldots, t_{n}\}$  of $3$-colourings of $V(J)$ satisfying the following conditions: $t_{0} = t$ and 
\begin{enumerate}
\item[($a$)] $t_{i}(v_{i}) = \alpha$ ($t_{i}(v_{i}) = \beta$) if there exists (does not exists, respectively) a $\beta_{t_{i-1}}$-path connecting two neighbours of $v_i$, for every $1 \leqslant  i \leqslant n$,
\item[($b$)] if $v \neq \ v_{i}$, then $t_{i}(v) = t_{i-1}(v)$, for every $1 \leqslant  i \leqslant n$.
\end{enumerate}
We first prove that
\begin{enumerate}
\item[($c$)] the set of all $\alpha_{t_i}$-vertices of $J_{1} \cup J_{2}$ is independent in $J$, for  $0 \leqslant  i \leqslant n$.
 \end{enumerate}
We apply induction on $i$. From condition $(7)$ of Definition \ref{definition4.3}, we see that $(c)$ holds for $i = 0$. Notice that~$v_{i+1}$ is a vertex of degree~$4$ and it has two neighbours in $J_3$. Hence, if $t_{i+1}(v_{i+1}) = \alpha$, then, by $(a)$, it is not adjacent to any $\alpha_{t_{i}}$-vertex of $J_{1} \cup J_{2}$. Moreover, from $(b)$, we conclude that a vertex $v \neq v_{i+1}$ is an $\alpha_{t_{i+1}}$-vertex  of $J_{1} \cup J_{2}$ if and only if its an $\alpha_{t_{i}}$-vertex of $J_{1} \cup J_{2}$. Thus, if $(c)$ holds for $i < n$, then it holds for $i+1$.

We now prove that
\begin{enumerate}
\item[($d$)] the set of all $\alpha_{t_i}$-vertices ($\beta_{t_i}$-vertices) induces a forest in $J$, for  $0 \leqslant  i \leqslant n$.
 \end{enumerate}
We apply induction on $i$. Since $B_{t} = \emptyset$, condition $(8)$ of Definition \ref{definition4.3} shows that $(d)$ holds for $i = 0$.  From~$(b)$, we conclude that a vertex $v \neq v_{i+1}$ is an $\alpha_{t_{i+1}}$-vertex ($\beta_{t_{i+1}}$-vertex) if and only if it is an $\alpha_{t_{i}}$-vertex ($\beta_{t_i}$-vertex, respectively). Notice that, by condition $(a)$ and by the Jordan curve theorem, $v_{i+1}$ belong neither to any $\alpha_{t_{i+1}}$-cycle nor to  any $\beta_{t_{i+1}}$-cycle.  Thus, if~$(d)$ holds for $i < n$, then it holds for $i+1$.

From $(c)$ it follows that the set $I$ of all $\alpha_{t_n}$-vertices of $J_{1} \cup J_{2}$ is independent in $J$. Notice that $J_{3} \cup I$ ($(J_{1} \cup J_{2})\setminus  I$) is the set of all $\alpha_{t_n}$-vertices ($\beta_{t_n}$-vertices, respectively) of  $V(J)$. Hence, by $(d)$, the theorem holds.
\end{proof}

\begin{proof} [\bf Proof of Theorem \ref{theorem1.3}]

We apply induction on the order of graphs belonging to $\Theta$.  If $J$ is  $3$-connected,  then,  by Theorem \ref{theorem1.2} and  by Lemma \ref{lemma1.1}, the theorem holds. Similarly, if $J$ is a $3$-cycle or a $4$-cycle, then the theorem holds. 

Suppose that $J$ is not $3$-connected and $J$ is neither $3$-cycle nor $4$-cycle. Then $J$ is a union of two subgraphs (say $G$ and $G_0$)  having only two common vertices (say $b, d$), and each of these subgraphs is different from the edge $bd$. 

Assume first that $J$ has no vertex of degree $2$.  It is easy to check that  both $G$ and $G_0$ are not bounded by a $3$-cycle. Thus, both~$G$ and $G_0$ are bounded by a $4$-cycle (say $C = abcda$ and $C_{0} = a_{0}bc_{0}da_{0}$, respectively) and vertices $b, d$ are of degree~$4$. Hence, at least one of the following conditions is satisfied:
\begin{enumerate}
\item[($a$)] $ V(G) \setminus  V(C)$ is contained in the interior of $C$, 
\item[($b$)] $V(G_0) \setminus  V(C_0)$ is contained in the interior of $C_0$.
\end{enumerate}
Assume that condition $(a)$ occurs. Hence, 
\[
d_{int(C)}(b) = d_{int(C)}(d) = d_{ext(C)}(a) = d_{ext(C)}(c) = 0.
\]  
Certainly, there exists a  cycle (say $C_{1} =  a_{1}b_{1}c_{1}d_{1}a_{1})$ contained in $G$ with the analogical property 
\[
d_{int(C_{1})}(b_{1}) = d_{int(C_{1})}(d_{1}) = d_{ext(C_{1})}(a_{1}) = d_{ext(C_{1})}(c_{1}) = 0
\]
 such that  $int(C_1)$ is \textit{minimal}. It means that not exist any $4$-cycle $E$ contained in $G$ with the same property such that $int(E) \subset int (C_{1})$. Since $int(C_1)$ is minimal, it is easy to check that vertices $a_{1}$, $c_{1}$ are of degree~$3$ in $J$ and they are adjacent. Certainly, $J -\{a_{1}, c_{1}\} \in {\Theta}$ and ${\Delta}(J -\{a_{1}, c_{1}\}) \leqslant 4$. By the induction hypothesis, there exists a partition ${\cal F}$ of  $V(J) \setminus  \{a_{1}, c_{1}\}$ into two subsets which is compatible with the partition $\{J_{1} \setminus  \{a_{1}, c_{1}\}, J_{2} \setminus  \{a_{1}, c_{1}\}, J_{3} \setminus  \{a_{1}, c_{1}\}\}$ and each part of ${\cal F}$ induces a forest in $J -\{a_{1}, c_{1}\}$. Then, by Lemma \ref{lemma3.1},  there exists a partition~$\cal K$ of $V(J)$  into two subsets which is compatible with~$\cal J$ and each part of $\cal K$  induces a forest in $J$. 
 
Assume now that $J$ has a vertex of degree $2$. Since $J$ is neither a $3$-cycle nor a $4$-cycle the following condition is satisfied
\begin{enumerate}
\item[($c$)] there exists a $k$-cycle $D$, for some $k = 3$,  $4$, and there is a vertex $a$ of degree $2$ which is the only vertex belonging to $int(D)$ or $ext(D)$. 
\end{enumerate}
 Then, $J - a \in {\Theta}$ and ${\Delta}(J -a) \leqslant 4$. By the induction hypothesis, there exists a partition ${\cal F}$ of  $V(J) \setminus  \{a\}$ into two subsets which is compatible with $\{J_{1}  \setminus  \{a\}, J_{2}  \setminus  \{a\}, J_{3}  \setminus  \{a\}\}$ and each part of ${\cal F}$ induces a forest in $J - a$. Then, by Lemma \ref{lemma3.2},  there exists a partition~$\cal K$ of $V(J)$  into two subsets which is compatible with $\cal J$ and each part of $\cal K$  induces a forest in $J$.
\end{proof}


\begin{thebibliography}{99}

\bibitem{flobar1} M.Chen, A. Raspaud and W. Wang, Vertex-aborcity of planar graphs without intersecting triangles, European J. Combin.  \textbf{33} (2012), 905--923.

\bibitem{flobar2} R. Diestel, Graph Theory, Springer-Verlag, (2005).

\bibitem{flobar3} G. Brinkmann, J. Goedgebeur and B.D. McKay, The minimality of the Georges - Kelmans graph, Mathematics of Computation  \textbf{91} (2022), 1483--1500.

\bibitem{flobar4} J. Florek, On Barnette's Conjecture, Discrete Mathematics \textbf{310} (2010), 1531--1535.

\bibitem{flobar5} J. Florek, On Barnette's Conjecture and the $H^{+-}$ property, J. Comb. Optim.  \textbf{31} (2016), 943--960.

\bibitem{flobar6} J. Florek, Remarks on Barnette's Conjecture, J. Comb. Optim.  \textbf{39} (2020), 149--155.

\bibitem{flobar7}  P.R. Goodey, Hamiltonian cycles in polytopes with even sides, Israel J. Math. \textbf{22} (1975), 52--56.

\bibitem{flobar8} S.L. Hakimi, E.F. Schmeichel, A note on the vertex arborcity of a graph, SIAM J. Discrete Math. \textbf{2} (1989), 64--67.

\bibitem{flobar9} P.J. Heawood, On the four-colour map theorem, Quart. J. Pure Appl. Math.  \textbf{29} (1898), 270-285.

\bibitem{flobar10} D.A. Holton, B. Manvel  and B.D.~McKay, Hamiltonian cycles in cubic 3-connected bipartite planar graphs, J. Combin. Theor. B \textbf{38} (1985), 279-297.

\bibitem{flobar11} T.R. Jensen and  B. Toft, Graph Colouring Problems, Wiley-Interscience in Series in Discrete Mathematics and Optimization, (1995).

\bibitem{flobar12} A.K. Kelmans, Constructions of cubic bipartite 3-connected graphs without Hamiltonian cycles, AMS Translations, Series 2, \textbf{158} (1994), 127--140.

\bibitem{flobar13} A.K. Kelmans, On Hamiltonian cycles in bipartite cubic $3$-connected planar graph, Technical Report RR26-203, Rutgers University and University of Puerto Rico, June 2003.

\bibitem{flobar14} M. Kr\'{o}l. On a sufficient and necessary  condition of $3$-colorability of a planar graphs, I. Prace Nauk. Inst. Mat. Fiz. Teoret PWr. \textbf{6} (1972), 37-40.

\bibitem{flobar15} S.K. Stein, B-sets and planar maps, Pacific J. Math.
\textbf{37} (1971), 217--224.

\bibitem{flobar16} A. Takanori, N. Takao and S. Nobuji, NP-completeness of the Hamiltonian cycle problem for bipartite graphs, J. Info Process \textbf{3} (1980), 73--76.

\bibitem{flobar17} W.T. Tutte (Ed.), Recent Progress in Combinatorics,
Academic Press, New York (1969), 343.
\end{thebibliography}
\end{document}